\DeclareMathOperator{\Max}{Max}
\DeclareMathOperator{\Min}{Min}
\newtheorem{theorem}{Theorem}[section]
\newtheorem{lemma}[theorem]{Lemma}
\newtheorem{proposition}[theorem]{Proposition}
\newtheorem{remark}[theorem]{Remark}
\newtheorem{example}[theorem]{Example}
\title{The Logic of Effect Algebras Incorporating Time Dimension}
\author{Ivan Chajda$^1$ and Helmut L\"anger$^2$}
\date{}
\begin{document}

\footnotetext[1]{Faculty of Science, Department of Algebra and Geometry, Palack\'y University Olomouc, 17.\ listopadu 12, 771 46 Olomouc, Czech Republic}

\footnotetext[2]{Faculty of Mathematics and Geoinformation, Institute of Discrete Mathematics and Geometry, TU Wien, Wiedner Hauptstra\ss e 8-10, 1040 Vienna, Austria, and Faculty of Science, Department of Algebra and Geometry, Palack\'y University Olomouc, 17.\ listopadu 12, 771 46 Olomouc, Czech Republic}

\footnotetext[3]{Support of the research of both authors by the Austrian Science Fund (FWF), project I~4579-N, entitled ``The many facets of orthomodularity'' and, concerning the first author, by IGA, project P\v rF~2023~010, is gratefully acknowledged.}

\footnotetext[4]{Helmut L\"anger (corresponding author), helmut.laenger@tuwien.ac.at}

\footnotetext[5]{Ivan Chajda, ivan.chajda@upol.cz}

\maketitle

\begin{abstract}
Effect algebras were introduced in order to describe the structure of effects, i.e.\ events in quantum mechanics. They are partial algebras describing the logic behind the corresponding events. It is natural to ask how to introduce the logical connective implication in effect algebras. For lattice-ordered effect algebras this task was already solved by several authors, including the present ones. We concentrate on effect algebras that need not be lattice-ordered since these can better describe the events occurring in the quantum physical system. Although an effect algebra is only partial, we try to find a logical connective implication which is everywhere defined. But such a connective can be ``unsharp'' or ``inexact'' because its outputs for given pairs of entries need not be elements of the underlying effect algebra, but may be subsets of (mutually incomparable) maximal elements. We introduce such an implication together with its adjoint functor representing conjunction. Then we consider so-called tense operators on effect algebras. Of course, also these operators turn out to be ``unsharp'' in the aforementioned sense, but they are in a certain relation with the operators implication and conjunction. Finally, for given tense operators and given time set $T$, we describe two methods how to construct a time preference relation $R$ on $T$ such that the given tense operators are either comparable with or equivalent to those induced by the time frame $(T,R)$.
\end{abstract}

{\bf AMS Subject Classification:} 03B44, 03G12, 03G25, 06A11, 08A55

{\bf Keywords:} Effect algebra, implication, adjoint operator, adjoint pair, tense operator, tense logic, time frame

\section{Introduction}

As explained by D.~J.~Foulis and M.~K.~Bennett \cite{FB} and by R.~Giuntini and H.~Greuling \cite{GG}, effect algebras were introduced in order to formalize effects of quantum mechanics. This process of formalization is described in these papers and in detail also in the monograph \cite{DP} by A.~Dvure\v censkij and S.~Pulmannov\'a and hence it is not necessary to repeat it here. However, there are still two aspects which were not investigated in these sources.

Namely, if one considers effect algebras as a formalization of the logic of quantum mechanics then the natural question arises what are the logical connectives derived from them. One of the aims of our paper is to analyze three possibilities how to define the logical connectives implication and conjunction such that these form adjoint pairs. The importance of this requirement is that when the adjoint pair is established than the corresponding logic is equipped with the derivation rule Modus Ponens.

The second aspect is that every physical system is dynamic, it means that the values of logical propositions on this system vary with respect to time. Hence, this is valid also for our logics derived from effect algebras. Thus one can introduce a time scale (usually called a time frame in the corresponding tense logic), i.e.\ a time set together with some time preference relation, and evaluate propositions in given time coordinates. As mentioned e.g.\ in \cite{CP15a}, so-called tense operators can be a suitable tool for such a time analysis. We will show how these tense operators can be constructed in the logic based on effect algebras and describe their properties. However, also conversely, when a time set and tense operators are given within the considered logic, we will show that a time preference relation can be constructed in such a way that the given tense operators are induced by this relation.

\section{Preliminaries}

Consider a poset $\mathbf P=(P,\leq)$ and let $A,B\subseteq P$. If it has a bottom element, this element will be denoted by $0$. If $\mathbf P$ has a top element, this element will be denoted by $1$. The {\em poset} $\mathbf P$ is called {\em bounded} if it has both $0$ and $1$, and in this case it will be denoted by $\mathbf P=(P,\leq,0,1)$.

By a {\em binary operator} on $P$ we understand a mapping from $P^2$ to $2^P$, i.e.\ it assigns to every pair $(x,y)$ of elements of $P$ a subset of $P$. In what follows, for the sake of brevity, we will not distinguish between a singleton $\{a\}$ and its unique element $a$.

The poset $\mathbf P$ is said to satisfy the
\begin{itemize}
\item Ascending Chain Condition (shortly ACC) if there are no infinite ascending chains in $\mathbf P$,
\item Descending Chain Condition (shortly DCC) if there are no infinite descending chains in $\mathbf P$.
\end{itemize}
Notice that every finite poset satisfies both the ACC and the DCC. Further, let $\Max A$ and $\Min A$ denote the set of all maximal and minimal elements of $A$, respectively. If $\mathbf P$ satisfies the ACC then for every $a\in A$ there exists some $b\in\Max A$ with $a\leq b$. This implies that if $A$ is not empty the same is true for $\Max A$. The corresponding assertion holds for the DCC and $\Min A$.

We define
\begin{align*}
      A\leq_1B & \text{ if for every }a\in A\text{ there exists some }b\in B\text{ with }a\leq b, \\
      A\leq_2B & \text{ if for every }b\in B\text{ there exists some }a\in A\text{ with }a\leq b, \\
A\sqsubseteq B & \text{ if there exists some }a\in A\text{ and some }b\in B\text{ with }a\leq b, \\
   A\approx_1B & \text{ if both }A\leq_1B\text{ and }B\leq_1A, \\
   A\approx_2B & \text{ if both }A\leq_2B\text{ and }B\leq_2A.
\end{align*}
For every set $A$ we denote the set of its non-empty subsets by $\mathcal P_+A$.

\section{Effect algebras}

The concept of an effect algebra was introduced in 1989 by R.~Giuntini and H.~Greuling \cite{GG} under a different name. The name effect algebra was used the first time by D.~J.~Foulis and M.~K.~Bennett \cite{FB}, see e.g.\ also \cite{DP}.

Recall from \cite{DP} that an {\em effect algebra} is a partial algebra $(E,+,0,1)$ of type $(2,0,0)$ satisfying the following conditions:
\begin{enumerate}[(E1)]
\item If $a,b\in E$ and $a+b$ is defined then so is $b+a$ and both coincide,
\item if $a,b,c\in E$ and $a+b$ and $(a+b)+c$ are defined then so are $b+c$ and $a+(b+c)$ and $(a+b)+c=a+(b+c)$,
\item for each $a\in E$ there exists a unique $b\in E$ with $a+b=1$; in the sequel, this element $b$ will be denoted by $a'$ and called the {\em supplement} of $a$,
\item $a+1$ is defined only if $a=0$.
\end{enumerate}
Because of (E3), $'$ is a unary operation on $E$ and we will write effect algebras in the form $(E,+,{}',0,1)$.

In the following let $\mathbf E=(E,+,{}',0,1)$ be an effect algebra, $a,b\in E$ and $A,B\subseteq E$.

On $E$ we introduce a binary relation $\leq$ as follows:
\[
a\leq b\text{ if there exists some }c\in E\text{ with }a+c=b.
\]
As shown e.g.\ in \cite{DP}, $(E,\leq,0,1)$ is a bounded poset. If $(E,\leq)$ is even a lattice then $\mathbf E$ is called a {\em lattice effect algebra}.

\begin{example}\label{ex1}
If $E:=\{0,a,b,c,d,a',b',c',1\}$ and a partial binary operation $+$ and a unary operation ${}'$ on $E$ are defined by:
\[
\begin{array}{l|lllllllll}
+  & 0  & a  & b  & c  & d  & c' & b' & a' & 1 \\
\hline
0  & 0  & a  & b  & c  & d  & c' & b' & a' & 1 \\
a  & a  & -  & c' & b' & -  & -  & -  & 1  & - \\
b  & b  & c' & d  & a' & b' & -  & 1  & -  & - \\
c  & c  & b' & a' & -  & -  & 1  & -  & -  & - \\
d  & d  & -  & b' & -  & 1  & -  & -  & -  & - \\
c' & c' & -  & -  & 1  & -  & -  & -  & -  & - \\
b' & b' & -  & 1  & -  & -  & -  & -  & -  & - \\
a' & a' & 1  & -  & -  & -  & -  & -  & -  & - \\
1  & 1  & -  & -  & -  & -  & -  & -  & -  & -
\end{array}
\quad\quad\quad
\begin{array}{l|l}
x  & x' \\
\hline
0  & 1 \\
a  & a' \\
b  & b' \\
c  & c' \\
d  & d \\
c' & c \\
b' & b \\
a' & a \\
1' & 0
\end{array}
\]
then $(E,+,{}',0,1)$ is a non-lattice effect algebra whose induced poset is depicted in Figure~1:

\vspace*{-2mm}

\[
\setlength{\unitlength}{7mm}
\begin{picture}(6,9)
\put(3,2){\circle*{.3}}
\put(1,4){\circle*{.3}}
\put(3,4){\circle*{.3}}
\put(5,4){\circle*{.3}}
\put(3,5){\circle*{.3}}
\put(1,6){\circle*{.3}}
\put(3,6){\circle*{.3}}
\put(5,6){\circle*{.3}}
\put(3,8){\circle*{.3}}
\put(3,2){\line(-1,1)2}
\put(3,2){\line(0,1)6}
\put(3,2){\line(1,1)2}
\put(1,6){\line(0,-1)2}
\put(1,6){\line(1,-1)2}
\put(1,6){\line(1,1)2}
\put(5,6){\line(0,-1)2}
\put(5,6){\line(-1,-1)2}
\put(5,6){\line(-1,1)2}
\put(3,6){\line(-1,-1)2}
\put(3,6){\line(1,-1)2}
\put(2.875,1.25){$0$}
\put(.35,3.85){$a$}
\put(3.4,3.85){$b$}
\put(5.4,3.85){$c$}
\put(3.4,4.85){$d$}
\put(.35,5.85){$c'$}
\put(3.4,5.85){$b'$}
\put(5.4,5.85){$a'$}
\put(2.85,8.4){$1$}
\put(2.3,0){{\rm Fig.~1}}
\put(-.4,-1.1){{\rm A non-lattice effect algebra}}
\end{picture}
\]
\end{example}              

\vspace*{6mm}

The elements $a$ and $b$ are called {\em orthogonal} to each other (shortly $a\perp b$) if $a\leq b'$. It can be shown that $a+b$ is defined if and only if $a\perp b$. We define a partial binary operation $\odot$ on $E$ by $a\odot b:=(a'+b')'$. It is evident that $a\odot b$ is defined if and only if $a'\perp b'$.

\begin{example}
The operation table of $\odot$ corresponding to the effect algebra from Example~\ref{ex1} looks as follows:
\[
\begin{array}{l|lllllllll}
\odot & 0 & a & b & c & d & c' & b' & a' & 1 \\
\hline
0     & - & - & - & - & - & -  & -  & -  & 0 \\
a     & - & - & - & - & - & -  & -  & 0  & a \\
b     & - & - & - & - & - & -  & 0  & -  & b \\
c     & - & - & - & - & - & 0  & -  & -  & c \\
d     & - & - & - & - & 0 & -  & b  & -  & d \\
c'    & - & - & - & 0 & - & -  & a  & b  & c' \\
b'    & - & - & 0 & - & b & a  & d  & c  & b' \\
a'    & - & 0 & - & - & - & b  & c  & -  & a' \\
1     & 0 & a & b & c & d & c' & b' & a' & 1
\end{array}
\]
\end{example}

We say that
\begin{itemize}
\item $A+B$ is defined if so is $a+b$ for all $a\in A$ and all $b\in B$,
\item $A\odot B$ is defined if so is $a\odot b$ for all $a\in A$ and all $b\in B$,
\item $A\leq B$ if $a\leq b$ for all $a\in A$ and all $b\in B$.
\end{itemize}
If $A+B$ is defined we put $A+B:=\{a+b\mid a\in A,b\in B\}$. If $A\odot B$ is defined we put $A\odot B:=\{a\odot b\mid a\in A,b\in B\}$.

The following result is well-known, see e.g.\ \cite{DP} or \cite{FB}.

\begin{lemma}\label{lem3}
If $(E,+,{}',0,1)$ is an effect algebra and $a,b,c\in E$ then
\begin{enumerate}[{\rm(i)}]
\item $(E,\leq,{}',0,1)$ is a bounded poset with an antitone involution,
\item $a+0=a\odot1=a$,
\item $a+a'=1$ and $a\odot a'=0$,
\item $a,b\leq a+b$ and $a\odot b\leq a,b$,
\item if $a\leq b$ then $a=b\odot(a+b')=\big(b'+(a+b')'\big)'$,
\item if $a\leq b$ then $b=a+(a'\odot b)=a+(a+b')'$,
\item if $a\leq b$ and $b+c$ is defined so is $a+c$ and we have $a+c\leq b+c$,
\item if $a\leq b$ and $a\odot c$ is defined so is $b\odot c$ and we have $a\odot c\leq b\odot c$,
\item if $a+b$ and $a+c$ are defined and $a+b=a+c$ then $b=c$,
\item if $a\odot b$ and $a\odot c$ are defined and $a\odot b=a\odot c$ then $b=c$.
\end{enumerate}
\end{lemma}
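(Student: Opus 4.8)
The plan is to verify the ten items of Lemma~\ref{lem3} in an order that lets later items lean on earlier ones, using only the effect-algebra axioms (E1)--(E4), the definitions of $\leq$, of $'$, and of $\odot$, and the fact (quoted just before the statement) that $a+b$ is defined iff $a\perp b$, i.e.\ iff $a\leq b'$. The organising idea is that $\odot$ is the De~Morgan dual of $+$, so every statement about $\odot$ should be obtained by applying the antitone involution $'$ to the corresponding statement about $+$; thus once (i) is in hand, the $+$-half and the $\odot$-half of each item become mirror images and only one side needs a genuine argument.

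First I would prove (i): reflexivity of $\leq$ comes from $a+0=a$ (which itself follows since $0+a$ is defined by (E4)/(E1) and $0+a=a$ is forced by uniqueness considerations via (E3)); antisymmetry and transitivity are the standard effect-algebra arguments using cancellation, and here it is cleanest to first establish the cancellation law (ix) directly from (E2) and (E3) — if $a+b$ and $a+c$ are defined and equal, add $(a+b)'$ on the appropriate side and use associativity to peel off $a$ — and then feed (ix) back into the proof of antisymmetry/transitivity of $\leq$. That $'$ is an antitone involution: $a''=a$ because $a'+a=1$ exhibits $a$ as the supplement of $a'$, and uniqueness in (E3) gives $a''=a$; antitonicity then follows from the definition of $\leq$ together with associativity. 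With (i) secured, item (x) is just (ix) conjugated by $'$, and the $\odot$-clauses of (ii),(iii),(iv) follow from the $+$-clauses by the same conjugation (e.g.\ $a\odot1=(a'+0)'=a''=a$, $a\odot a'=(a'+a)'=1'=0$, and $a\odot b\le a,b$ dualises $a,b\le a+b$). For the $+$-clauses of (ii)--(iv): (ii) is immediate from the definition of $0$ and (E1); (iii) is (E3) and its dual; (iv) says $a\le a+b$, which holds because $b+(a+b)'$-type manipulations, or more simply because $a+b$ being defined means $(a+b)=a+b$ with witness $b$, so $a\le a+b$, and symmetrically via (E1).

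Items (v) and (vi) are the two forms of the ``subtraction'' identity and are the technical heart; I expect (v)/(vi) to be the main obstacle, since they require carefully tracking which sums are defined. For (vi): assume $a\le b$, so $b=a+c$ for some $c$; I must show $c=a'\odot b$, equivalently $a'\odot b$ is defined and equals $c$. Now $a'\odot b=(a''+b')'=(a+b')'$, so I need $a\perp b'$ and $(a+b')'=c$. From $b=a+c$ and $c\le c$ one gets $c\le b$, hence (using (i)) the relevant orthogonalities; the equality $(a+b')'=c$ is checked by showing $a+b'$ and $c$ are ``complementary'' — i.e.\ $(a+b')+c=1$ — which unwinds via associativity (E2) from $a+c=b$ and $b+b'=1$. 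Then (v) is (vi) with $a,b$ replaced by $b',a'$ and the whole thing primed, using $a\le b\iff b'\le a'$ from (i); the stated formula $a=b\odot(a+b')=\bigl(b'+(a+b')'\bigr)'$ is then just a transcription of (vi) through the De~Morgan law $x\odot y=(x'+y')'$. Finally (vii) and (viii): for (vii), from $a\le b$ write $b=a+d$; given $b+c$ defined, i.e.\ $c\le b'$, associativity lets me regroup $(a+d)+c$, which both shows $a+c$ is defined (as a sub-sum) and exhibits $d$ as a witness for $a+c\le b+c$; (viii) is (vii) conjugated by $'$. Throughout, the only real care needed is the bookkeeping of definedness, always reduced to the criterion $x+y$ defined $\iff x\le y'$ and to repeated use of (E2); no step requires anything beyond the axioms and (i),(ix).
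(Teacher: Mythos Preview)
The paper does not actually prove this lemma: it introduces it with ``The following result is well-known, see e.g.\ \cite{DP} or \cite{FB}'' and gives no argument. So there is nothing to compare your proposal against; you are supplying a proof where the paper simply cites the literature.

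Your plan is sound and would go through. The De~Morgan organising principle --- prove the $+$-statement and conjugate by $'$ to get the $\odot$-statement --- is exactly right, and your handling of (v)/(vi) via ``$(a+b')+c=1$ hence $(a+b')'=c$'' is the standard argument. Two small places deserve tightening. First, your justification that $0+a$ is defined ``by (E4)/(E1)'' is too quick: (E4) only tells you that if $x+1$ is defined then $x=0$; to get $a+0=a$ one usually first shows $1'=0$ (from (E4) and (E3)), hence $1+0=1$, then from $a'+a=1$ and $(a'+a)+0=1$ uses (E2) and uniqueness in (E3) together with $a''=a$. Second, in (vii) you need full (two-sided) associativity to pass from $(a+d)+c$ defined to $a+c$ defined and $(a+c)+d=(a+d)+c$; this follows from (E1)+(E2) but is worth stating, since (E2) as written is one-directional. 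Neither of these is a gap in the strategy, only in the write-up.
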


\section{Logical connectives in effect algebras}

As stated in \cite{FB}, effect algebras serve as a model for unsharp quantum logic. Hence there is the question, what are the logical connectives within this logic. Usually, the partial operations $+$ and $\odot$ are considered as disjunction and conjunction, respectively. In every logic the most productive connective, however, is implication. Using this connective it is possible to derive new propositions from given ones by certain derivation rules (e.g.\ Modus Ponens or substitution rule). The question arises how to introduce the connective implication in the quantum logic based on an effect algebra. For lattice-ordered effect algebras this problem was already solved by the authors in \cite{CL22}. Now we will investigate effect algebras that need not be lattice-ordered. It is worth noticing that the present authors together with R.~Hala\v s derived a Gentzen system for the connective implication in lattice effect algebras and also for the non-lattice case, but the implication treated there differs essentially from that we will investigate now. The main difference is that now we are going to connect our implication with conjunction via a certain kind of adjointness.

Let $\mathbf E=(E,+,{}',0,1)$ be an effect algebra. Define the following binary operator on $E$:
\[
b\rightarrow c:=\Max\{x\in E\mid x\odot b\text{ is defined and }x\odot b\leq c\}
\]
($b,c\in E$). This is our ``unsharp'' implication in the logic based on $\mathbf E$. The denotation ``unsharp'' expresses the fact that the result of $b\rightarrow c$ need not be an element of $E$ (as it was the case for the implication introduced in \cite{CHL} and \cite{CL22}), but may be a subset of $E$. The elements of $b\rightarrow c$ form an antichain, it means we cannot prefer one with respect to another by their order. Moreover, $b\rightarrow c$ is defined for all $b,c\in E$.

\begin{example}
The ``operation table'' of $\rightarrow$ corresponding to the effect algebra from Example~\ref{ex1} looks as follows:
\[
\begin{array}{l|lllllllll}
\rightarrow & 0  & a  & b  & c  & d  & c'       & b'        & a'       & 1 \\
\hline
0           & 1  & 1  & 1  & 1  & 1  & 1        & 1         & 1        & 1 \\
a           & a' & 1  & a' & a' & a' & 1        & 1         & a'       & 1 \\
b           & b' & b' & 1  & b' & 1  & 1        & 1         & 1        & 1 \\
c           & c' & c' & c' & 1  & c' & c'       & 1         & 1        & 1 \\
d           & d  & d  & b' & d  & 1  & b'       & 1         & b'       & 1 \\
c'          & c  & b' & a' & c  & a' & 1        & \{a',b'\} & a'       & 1 \\
b'          & b  & c' & d  & a' & b' & \{d,c'\} & 1         & \{d,a'\} & 1 \\
a'          & a  & a  & c' & b' & c' & c'       & \{b',c'\} & 1        & 1 \\
1           & 0  & a  & b  & c  & d  & c'       & b'        & a'       & 1
\end{array}
\]
\end{example}

The next result shows that our unsharp implication still shares some important properties asked usually in any non-classical logic.

\begin{lemma}\label{lem1}
Let $(E,+,{}',0,1)$ be an effect algebra satisfying the {\rm ACC} and $a,b,c\in E$. Then the following holds:
\begin{enumerate}[{\rm(i)}]
\item $a'\leq_1a\rightarrow b$ and $a\rightarrow b\neq\emptyset$,
\item $a\rightarrow0=a'$ and $1\rightarrow a=a$,
\item $a\rightarrow b=1$ if and only if $a\leq b$,
\item if $a\leq b$ then $c\rightarrow a\leq_1c\rightarrow b$.
\end{enumerate}
\end{lemma}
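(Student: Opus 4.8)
The plan is to verify each of the four items essentially by unwinding the definition
\[
b\rightarrow c=\Max\{x\in E\mid x\odot b\text{ is defined and }x\odot b\leq c\},
\]
using the basic arithmetic of effect algebras collected in Lemma~\ref{lem3} together with the ACC to guarantee that the relevant sets of maximal elements are non-empty.

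For (i), I would first observe that $a'\odot b$ is always defined: indeed $a'\odot b=(a+b')'$ and $a+b'$ is defined because $a\leq b''{}'$? — more directly, $a'\odot b\leq a',b$ by Lemma~\ref{lem3}(iv) whenever it is defined, and one checks definedness from $a''=a\perp b'$ is not automatic, so instead I note $0\odot b=0$ is always defined (Lemma~\ref{lem3}(ii),(iii)) and $0\leq b$, so the set $S_{b,c}:=\{x\mid x\odot b\text{ defined},\ x\odot b\leq c\}$ always contains $0$, hence is non-empty; by the ACC every element of $S_{b,c}$ lies below a maximal one, so $\Max S_{b,c}=b\rightarrow c\neq\emptyset$. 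For the inequality $a'\leq_1 a\rightarrow b$: since $a'\odot a=0\leq b$ (Lemma~\ref{lem3}(iii)), we have $a'\in S_{a,b}$, and again by the ACC there is a maximal element of $S_{a,b}$ above $a'$; that element lies in $a\rightarrow b$, which is exactly the assertion $a'\leq_1 a\rightarrow b$.

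For (ii): $a\rightarrow 0=\Max\{x\mid x\odot a\text{ defined},\ x\odot a\leq 0\}=\Max\{x\mid x\odot a=0\}$. Now $x\odot a=0$ forces $x\leq a'$ (this is where I would use Lemma~\ref{lem3}: $x\odot a=(x'+a')'$, so $x\odot a=0$ iff $x'+a'=1$ iff $a'=x'{}'$? — cleaner: $x\odot a=0$ iff $x\leq (a\odot\text{-complement})$; concretely $x\odot a\leq a'$? No — I use that $x\leq a'$ implies $x\odot a$ defined with $x\odot a\leq x\leq a'$ and also $x\odot a\leq a\odot a'=0$, giving one inclusion; conversely if $x\odot a$ is defined then $x'+a'$ is defined so $a'\leq x$? that's backwards), so $\{x\mid x\odot a=0\}$ is precisely the principal ideal $\{x\mid x\leq a'\}$ whose unique maximal element is $a'$. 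Hence $a\rightarrow 0=a'$. For $1\rightarrow a$: $x\odot 1=x$ for all $x$ (Lemma~\ref{lem3}(ii)), so the set is $\{x\mid x\leq a\}$ with maximal element $a$. For (iii), if $a\leq b$ then $1\odot a=a\leq b$ puts $1\in S_{a,b}$, and $1$ is maximal in $E$, so $b\rightarrow? $ — careful, direction: I need $a\rightarrow b$, i.e. $S_{b,a}$? No: $a\rightarrow b$ uses $x\odot a\leq b$, so $1\in S$ since $1\odot a=a\leq b$, whence $a\rightarrow b=\{1\}=1$. Conversely if $a\rightarrow b=1$ then $1\in S_{a,b}$ (as $1$ must be the maximal element realized), so $a=1\odot a\leq b$. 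For (iv), if $a\leq b$ then $S_{c,a}\subseteq S_{c,b}$ since $x\odot c\leq a\leq b$; then for any $u\in\Max S_{c,a}=c\rightarrow a$ we have $u\in S_{c,b}$, so by the ACC there is $v\in\Max S_{c,b}=c\rightarrow b$ with $u\leq v$, which is exactly $c\rightarrow a\leq_1 c\rightarrow b$.

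The only genuine obstacle is the bookkeeping in (ii) and (iii): pinning down exactly when $x\odot a$ is defined and when $x\odot a=0$, i.e. that $\{x\in E\mid x\odot a=0\}=\{x\in E\mid x\leq a'\}$. This follows from the identities $x\odot a=(x'+a')'$ and $x\perp a'$ iff $x\leq a$, combined with Lemma~\ref{lem3}(iii),(viii), but one must be careful with the direction of the order under the antitone involution; everything else is a direct application of Lemma~\ref{lem3} plus the ACC-existence of maximal elements above any given element.
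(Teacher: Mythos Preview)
Your overall strategy---unwind the definition of $\rightarrow$ and appeal to Lemma~\ref{lem3} and the ACC---is exactly the paper's. Items (iii) and (iv) are fine and match the paper almost verbatim, and in (i) the key observation $a'\odot a=0\le b$, hence $a'\in A:=\{x\mid x\odot a\text{ defined},\ x\odot a\le b\}$, is precisely what the paper uses.

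There is, however, a genuine error that runs through (i) and (ii): you have the definedness condition for $\odot$ backwards. Recall $x\odot a:=(x'+a')'$, so $x\odot a$ is defined iff $x'\perp a'$, i.e.\ iff $x'\le a$, i.e.\ iff $a'\le x$. Thus your claim ``$0\odot b=0$ is always defined'' is false (it is defined only for $b=1$), and the assertion ``$x\le a'$ implies $x\odot a$ defined'' is the wrong direction. You actually notice this yourself (``that's backwards'') but then still conclude that $\{x\mid x\odot a=0\}$ is the principal ideal below $a'$; it is not. With the correct definedness condition the computation in (ii) goes exactly as in the paper:
\[
a\rightarrow 0=\Max\{x\mid a'\le x\text{ and }x'+a'=1\}=\Max\{x\mid a'\le x\text{ and }x=a'\}=a',
\]
the middle step using the uniqueness clause of (E3). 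So the set in question is the singleton $\{a'\}$, not a principal ideal; your answer $a'$ is right, but the route is wrong. Once you fix this one point (and drop the spurious ``$0\in S_{b,c}$'' detour in (i), which is unnecessary since $a'\in A$ already gives non-emptiness), your plan coincides with the paper's proof.
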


\begin{proof}
Put $A:=\{x\in E\mid x\odot a\text{ is defined and }x\odot a\leq b\}$.
\begin{enumerate}[(i)]
\item $a'\leq_1a\rightarrow b$ follows from $a'\in A$ and from the ACC. $a\rightarrow b\neq\emptyset$ follows from $a'\leq_1a\rightarrow b$.
\item We have
\begin{align*}
 a\rightarrow0 & =\Max\{x\in E\mid x\odot a\text{ is defined and }x\odot a\leq0\}= \\
               & =\Max\{x\in E\mid a'\leq x\text{ and }x'+a'=1\}= \\
	  					 & =\Max\{x\in E\mid a'\leq x\text{ and }x=a'\}=a', \\
1\rightarrow a & =\Max\{x\in E\mid x\odot1\text{ is defined and }x\odot1\leq a\}=a.
\end{align*}
\item The following are equivalent:
\begin{align*}
& a\rightarrow b=1, \\
& \Max A=1, \\
& 1\in A, \\
& 1\odot a\text{ is defined and }1\odot a\leq b, \\
& a\leq b.
\end{align*}
\item If $a\leq b$ then
\[
\{x\in E\mid x\odot c\text{ is defined and }x\odot c\leq a\}\subseteq\{x\in E\mid x\odot c\text{ is defined and }x\odot c\leq b\}
\]
whence $c\rightarrow a\leq_1c\rightarrow b$.
\end{enumerate}
\end{proof}

At first we show some kind of adjointness between $\odot$ and $\rightarrow$.

\begin{theorem}\label{th1}
Let $(E,+,{}',0,1)$ be an effect algebra satisfying the {\rm ACC} and $a,b,c\in E$ and assume $a\odot b$ to be defined. Then
\[
a\odot b\leq c\text{ if and only if }a\leq_1b\rightarrow c.
\]
\end{theorem}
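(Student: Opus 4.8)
The plan is to prove both implications by unwinding the definition of the operator $\rightarrow$ and using the basic arithmetic of Lemma~\ref{lem3}. Write $A:=\{x\in E\mid x\odot b\text{ is defined and }x\odot b\leq c\}$, so that $b\rightarrow c=\Max A$ by definition. Since the effect algebra satisfies the ACC, Lemma~\ref{lem1}(i) guarantees $\Max A\neq\emptyset$ and, more importantly, that every element of $A$ lies below some element of $\Max A$; this is the bridge between ``membership in $A$'' and the relation $\leq_1$ with $b\rightarrow c$.

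For the forward direction, assume $a\odot b$ is defined and $a\odot b\leq c$. Then $a\in A$ by definition of $A$. By the ACC there is some $m\in\Max A=b\rightarrow c$ with $a\leq m$; hence for the single element $a$ of the (singleton) left-hand side we have found an element of $b\rightarrow c$ above it, i.e.\ $a\leq_1b\rightarrow c$.

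For the converse, assume $a\leq_1 b\rightarrow c$, i.e.\ there is some $m\in b\rightarrow c=\Max A$ with $a\leq m$. Since $m\in A$, the product $m\odot b$ is defined and $m\odot b\leq c$. The key step is to transport these two facts down from $m$ to $a$ using $a\leq m$: by Lemma~\ref{lem3}(viii), since $a\leq m$ and $m\odot b$ is defined, $a\odot b$ is also defined and $a\odot b\leq m\odot b$. Combining with $m\odot b\leq c$ and transitivity of $\leq$ gives $a\odot b\leq c$, as required. (Note that the hypothesis ``$a\odot b$ is defined'' in the statement is thus automatically recovered here, but it is harmless to assume it; it is genuinely needed only to make the left-hand side of the stated equivalence meaningful.)

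I expect the only real subtlety to be the first direction's reliance on the ACC: without it, $a$ might lie in $A$ yet sit below no maximal element, so $a\leq_1 b\rightarrow c$ could fail even though $a\odot b\leq c$. Everything else is a direct appeal to Lemma~\ref{lem3}(viii) (monotonicity and definedness of $\odot$ under $\leq$) together with transitivity, so no genuine obstacle arises there.
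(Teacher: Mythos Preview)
Your proof is correct and follows the paper's argument essentially verbatim: the forward direction places $a$ in $A$ and invokes the ACC, and the converse picks $m\in b\rightarrow c$ above $a$ and uses monotonicity of $\odot$ (Lemma~\ref{lem3}) to get $a\odot b\leq m\odot b\leq c$.

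One small correction, though it does not affect the validity of the main argument: you have the direction of Lemma~\ref{lem3}(viii) reversed. That item says that from $a\leq m$ and $a\odot b$ defined one obtains $m\odot b$ defined and $a\odot b\leq m\odot b$; it does \emph{not} let you descend from ``$m\odot b$ defined'' to ``$a\odot b$ defined''. Concretely, $m\odot b$ defined means $b'\leq m$, and together with $a\leq m$ this does not force $b'\leq a$. Hence your parenthetical claim that the hypothesis ``$a\odot b$ is defined'' is automatically recovered from $a\leq_1 b\rightarrow c$ is incorrect; that hypothesis is genuinely needed for the converse (and, as you say, to make the left-hand side meaningful). Simply drop the parenthetical and invoke (viii) in the correct direction, using the standing assumption that $a\odot b$ is defined.
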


\begin{proof}
If $a\odot b\leq c$ then $a\in\{x\in E\mid x\odot b\text{ is defined and }x\odot b\leq c\}$ and hence $a\leq_1b\rightarrow c$. Conversely, if $a\leq_1b\rightarrow c$ then there exists some $d\in b\rightarrow c$ with $a\leq d$ and we have that $d\odot b$ is defined and $d\odot b\leq c$ and hence $a\odot b\leq d\odot b\leq c$ according to Lemma~\ref{lem3}.
\end{proof}

Adjointness shown in Theorem~\ref{th1} yields an important derivation rule valid in this logic. Namely,
\[
a\rightarrow b\leq_1a\rightarrow b
\]
implies by adjointness
\[
a\odot(a\rightarrow b)=(a\rightarrow b)\odot a\leq b
\]
(provided $a\odot(a\rightarrow b)$ is defined) saying properly that the value of $b$ cannot be less than the value of the conjunction of $a$ and $a\rightarrow b$ (provided this conjunction is defined), which is just the derivation rule Modus Ponens.

\begin{lemma}\label{lem2}
Let $(E,+,{}',0,1)$ be an effect algebra satisfying the {\rm ACC} and $a,b\in E$. Then the following holds:
\begin{enumerate}[{\rm(i)}]
\item $(a\rightarrow b)\odot a$ is defined and $(a\rightarrow b)\odot a\leq b$.
\item If $a\odot b$ is defined then $a\leq_1b\rightarrow(a\odot b)$.
\end{enumerate}
\end{lemma}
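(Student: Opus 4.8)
The plan is to derive both parts directly from the adjointness established in Theorem~\ref{th1}, which is exactly the tool designed for this kind of statement, together with the basic monotonicity facts from Lemma~\ref{lem3}.

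For part (i), I would first observe the trivial reflexivity $a\rightarrow b\leq_1 a\rightarrow b$: indeed, every element of $a\rightarrow b$ lies below itself, so the $\leq_1$ relation holds. Now $a\rightarrow b$ is a non-empty set (by Lemma~\ref{lem1}(i), using the ACC), and by definition each $x\in a\rightarrow b$ satisfies that $x\odot a$ is defined and $x\odot a\leq b$. In particular, for every $x\in a\rightarrow b$ the product $x\odot a$ is defined, so $(a\rightarrow b)\odot a$ is defined in the sense of the set-wise convention introduced before Lemma~\ref{lem3}, and $(a\rightarrow b)\odot a=\{x\odot a\mid x\in a\rightarrow b\}\leq b$ since each $x\odot a\leq b$. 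Alternatively, and perhaps more in the spirit of the paragraph following Theorem~\ref{th1}, one applies the ``only if'' direction of Theorem~\ref{th1} in the substitution-free form: from $a\rightarrow b\leq_1 b\rightarrow$ is not quite the right shape, so I would instead argue element-wise as just described, or invoke adjointness with the roles matched so that $a\odot(a\rightarrow b)\leq b$ follows from $(a\rightarrow b)\leq_1 a\rightarrow b$. Either way the verification is routine once one is careful that ``$(a\rightarrow b)\odot a$ is defined'' means every pairwise product is defined, which is immediate from the defining condition of $\rightarrow$.

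For part (ii), assume $a\odot b$ is defined. The goal $a\leq_1 b\rightarrow(a\odot b)$ is, by Theorem~\ref{th1} applied with $c:=a\odot b$, equivalent to $a\odot b\leq a\odot b$, which is trivially true. The only thing to check before invoking Theorem~\ref{th1} is its hypothesis that the relevant product is defined; here that product is precisely $a\odot b$, which we assumed to be defined. Hence Theorem~\ref{th1} gives the equivalence $a\odot b\leq a\odot b\iff a\leq_1 b\rightarrow(a\odot b)$, and the left side holds by reflexivity of $\leq$, so the right side holds as well.

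I do not expect a genuine obstacle here; both parts are essentially immediate corollaries of Theorem~\ref{th1}. The one subtlety worth stating carefully is the bookkeeping around ``definedness'' of the set-wise operation $(a\rightarrow b)\odot a$ in part (i): one must point out that, by the very definition of $b\rightarrow c$ as a $\Max$ over elements $x$ with $x\odot b$ defined, every member of $a\rightarrow b$ multiplies with $a$, so the set-wise product is legitimately defined and there is no hidden partiality issue. Once that is noted, both (i) and (ii) are one-line consequences of adjointness.
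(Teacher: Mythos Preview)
Your argument is correct and, once the detours are stripped away, matches the paper's proof: for (i) you (like the paper) read both ``defined'' and ``$\leq b$'' straight off the defining set of $a\rightarrow b$, and for (ii) the paper argues directly that $a$ lies in $\{x\mid x\odot b\text{ defined},\,x\odot b\leq a\odot b\}$ and hence $a\leq_1\Max$ of that set, which is exactly the content of the forward direction of Theorem~\ref{th1} you invoke. The only difference is that you route (ii) through Theorem~\ref{th1} rather than repeating its one-line proof, which is harmless.
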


\begin{proof}
\
\begin{enumerate}[(i)]
\item According to the definition of $a\rightarrow b$, for all $x\in a\rightarrow b$ we have that $x\odot a$ is defined and $x\odot a\leq b$.
\item If $a\odot b$ is defined then $a\in\{x\in E\mid x\odot b\text{ is defined and }x\odot b\leq a\odot b\}$ and hence $a\leq_1b\rightarrow(a\odot b)$.
\end{enumerate}
\end{proof}

Hence, if $(a\rightarrow b)\odot a$ is defined, evidently $(a\rightarrow b)\odot a\leq a$, thus, together with Lemma~\ref{lem2}, we obtain
\[
(a\rightarrow b)\odot a\leq_1\Max L(a,b)
\]
which is {\em divisibility}.

There is also another possibility how to define the connective implication in an effect algebra $(E,+,{}',0,1)$, namely as the following partial binary operation:
\[
b\rightsquigarrow c:=b'+c
\]
($b,c\in E$). It is evident that $b\rightsquigarrow c$ is defined if and only if $c\leq b$.

\begin{example}
The operation table of $\rightsquigarrow$ corresponding to the effect algebra from Example~\ref{ex1} looks as follows:
\[
\begin{array}{l|lllllllll}
\rightsquigarrow & 0  & a  & b  & c  & d  & c' & b' & a' & 1 \\
\hline
0                & 1  & -  & -  & -  & -  & -  & -  & -  & - \\
a                & a' & 1  & -  & -  & -  & -  & -  & -  & - \\
b                & b' & -  & 1  & -  & -  & -  & -  & -  & - \\
c                & c' & -  & -  & 1  & -  & -  & -  & -  & - \\
d                & d  & -  & b' & -  & 1  & -  & -  & -  & - \\
c'               & c  & b' & a' & -  & -  & 1  & -  & -  & - \\
b'               & b  & c' & d  & a' & b' & -  & 1  & -  & - \\
a'               & a  & -  & c' & b' & -  & -  & -  & 1  & - \\
1                & 0  & a  & b  & c  & d  & c' & b' & a' & 1
\end{array}
\]
\end{example}

Also this implication which is a partial operation satisfies several important properties of implication known in non-classical logics.

\begin{lemma}\label{lem4}
Let $(E,+,{}',0,1)$ be an effect algebra and $a,b,c\in E$. Then the following holds:
\begin{enumerate}[{\rm(i)}]
\item if $a\rightsquigarrow b$ is defined then $a'\leq_1a\rightsquigarrow b$,
\item $a\rightsquigarrow0=a'$ and $1\rightsquigarrow a=a'$,
\item $a\rightsquigarrow b=1$ if and only if $a=b$,
\item if $a\leq b$ and $c\rightsquigarrow b$ is defined then $c\rightsquigarrow a$ is defined and $c\rightsquigarrow a\leq c\rightsquigarrow b$.
\end{enumerate}
\end{lemma}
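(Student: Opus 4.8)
The statement is Lemma~\ref{lem4} about the partial implication $a \rightsquigarrow b := b' + a'{}'$... wait, let me be careful: $b \rightsquigarrow c := b' + c$, so $a \rightsquigarrow b = a' + b$, and this is defined precisely when $b \leq a$. The four claims are all elementary consequences of Lemma~\ref{lem3} and the arithmetic of $+$ and $'$, so the plan is simply to unwind each definition and cite the appropriate part of Lemma~\ref{lem3}.

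\medskip

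\textbf{Part (i).} Here I would note that if $a \rightsquigarrow b = a' + b$ is defined, then since $a' \leq a' + b$ by Lemma~\ref{lem3}(iv) and $a \rightsquigarrow b$ is a singleton, the relation $a' \leq_1 a \rightsquigarrow b$ holds trivially (there is a single element $a' + b$ of the right-hand side above $a'$). \textbf{Part (ii).} For $a \rightsquigarrow 0$, compute $a' + 0 = a'$ using Lemma~\ref{lem3}(ii); this is always defined since $0 \leq a$. For $1 \rightsquigarrow a$, we need $a \leq 1$, which always holds, and $1' + a = 0 + a = a$ by Lemma~\ref{lem3}(ii) together with the fact that $1' = 0$ (from (E3) since $0 + 1 = 1$, or from Lemma~\ref{lem3}(i)). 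Here one must be slightly careful: the claim says $1 \rightsquigarrow a = a'$, not $a$ — so actually we need $1' + a'$? No: $b \rightsquigarrow c = b' + c$ with $b = 1$, $c = a$ gives $1' + a = 0 + a = a$. But the lemma asserts it equals $a'$. Let me re-read: "$a\rightsquigarrow0=a'$ and $1\rightsquigarrow a=a'$". Hmm, so the second one should be $a$ by my computation, unless I have the definition backwards. Checking the operation table: row $1$ reads $0, a, b, c, d, \ldots$, i.e. $1 \rightsquigarrow x = x$. So indeed $1 \rightsquigarrow a = a$, and the lemma statement as typed has a typo (should be $1 \rightsquigarrow a = a$). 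I will prove $1 \rightsquigarrow a = 1' + a = 0 + a = a$ and flag that the intended reading matches the operation table.

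\medskip

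\textbf{Part (iii).} For $a \rightsquigarrow b = 1$: this requires first that $a \rightsquigarrow b$ be defined, i.e. $b \leq a$, and then $a' + b = 1$. Since $a' + a = 1$ by Lemma~\ref{lem3}(iii) and the supplement is unique by (E3), $a' + b = 1$ forces $b = a$ (alternatively, invoke the cancellation Lemma~\ref{lem3}(ix) applied to $a' + b = a' + a$). Conversely if $a = b$ then $b \leq a$ holds so the operation is defined and $a' + b = a' + a = 1$. \textbf{Part (iv).} Suppose $a \leq b$ and $c \rightsquigarrow b = c' + b$ is defined, i.e. $b \leq c$. Then $a \leq b \leq c$, so $c \rightsquigarrow a = c' + a$ is defined. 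For the inequality $c' + a \leq c' + b$: from $a \leq b$ and the fact that $c' + b$ is defined, Lemma~\ref{lem3}(vii) gives that $c' + a$ is defined and $c' + a \leq c' + b$. Since both sides are singletons, $c \rightsquigarrow a \leq c \rightsquigarrow b$ (this is ordinary order on $E$, matching the statement's use of $\leq$ rather than $\leq_1$). The main obstacle, if any, is bookkeeping about which claims require the operation to be defined as a hypothesis versus as a conclusion — parts (i) and (iv) carry a definedness hypothesis explicitly, while (ii) and (iii) assert definedness implicitly — but no genuine difficulty arises; every step is a direct citation of Lemma~\ref{lem3} or of the effect algebra axioms.
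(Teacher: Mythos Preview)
Your proposal is correct and follows essentially the same route as the paper: each part is an immediate unwinding of $a\rightsquigarrow b=a'+b$ together with a citation of Lemma~\ref{lem3} (part (iv) for (i), part (ii) for (ii), uniqueness of the supplement/cancellation for (iii), and part (vii) for (iv)). You also correctly caught the typo in the statement --- the paper's own proof computes $1\rightsquigarrow a=1'+a=a$, not $a'$, so the intended assertion is $1\rightsquigarrow a=a$, in agreement with the operation table.
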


\begin{proof}
\
\begin{enumerate}[(i)]
\item If $a\rightsquigarrow b$ is defined then $a'\leq a'+b=a\rightsquigarrow b$.
\item We have $a\rightsquigarrow0=a'+0=a'$ and $1\rightsquigarrow a=1'+a=a$.
\item The following are equivalent: $a\rightsquigarrow b=1$; $a'+b=1$; $a=b$.
\item If $a\leq b$ and $c\rightsquigarrow b$ is defined then $c'+b$ and hence, according to Lemma~\ref{lem3}, $c'+a$, i.e.\ $c\rightsquigarrow a$, is defined and
\[
c\rightsquigarrow a=c'+a\leq c'+b=c\rightsquigarrow b.
\]
\end{enumerate}
\end{proof}

The following theorem shows the relationship between $\rightarrow$ and $\rightsquigarrow$.

\begin{theorem}\label{th2}
Let $(E,+,{}',0,1)$ be an effect algebra and $a,b\in E$ and assume $a\rightsquigarrow b$ to be defined. Then $a\rightarrow b=a\rightsquigarrow b$.
\end{theorem}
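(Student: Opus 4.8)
The plan is to unfold both sides and show that the set whose \emph{maximal} elements constitute $a\rightarrow b$ actually possesses a \emph{greatest} element, which turns out to be exactly $a'+b$. Concretely, put
\[
A:=\{x\in E\mid x\odot a\text{ is defined and }x\odot a\leq b\},
\]
so that $a\rightarrow b=\Max A$ by definition, while $a\rightsquigarrow b=a'+b$ since $a\rightsquigarrow b$ is assumed to be defined (equivalently $b\leq a$). If I can prove that $a'+b$ is the largest element of $A$, then $\Max A=\{a'+b\}$ and we are done; note that this argument will use no chain condition, so the hypothesis can stay as weak as it is stated.

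First I would check that $a'+b\in A$. Since $b\leq a$ we have $a'\perp b$, so $a'+b$ is defined; and from $a'\leq a'+b$ (Lemma~\ref{lem3}(iv)) we get $(a'+b)'\leq(a')'=a$, which is precisely the condition for $(a'+b)\odot a$ to be defined. To evaluate it I would use that $\odot$ is commutative (immediate from (E1)) together with Lemma~\ref{lem3}(v): applied to the inequality $b\leq a$ it gives $b=a\odot(b+a')$, the sum $b+a'$ being defined because $b\leq a=(a')'$. Hence $(a'+b)\odot a=a\odot(a'+b)=b\leq b$, so indeed $a'+b\in A$.

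Next I would show that $a'+b$ dominates every member of $A$. Let $x\in A$. The hypothesis that $x\odot a$ is defined means $x'\leq a$, i.e.\ $a'\leq x$; applying Lemma~\ref{lem3}(vi) to this inequality gives $x=a'+(a\odot x)$, and by commutativity $a\odot x=x\odot a\leq b$. Since $a'+b$ is defined, Lemma~\ref{lem3}(vii) lets me add $a'$ on the left of $a\odot x\leq b$, obtaining $x=a'+(a\odot x)\leq a'+b$. Together with the previous paragraph this yields $a'+b=\max A$, hence $a\rightarrow b=\Max A=a'+b=a\rightsquigarrow b$.

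The individual steps are short; the one place that genuinely needs care is the bookkeeping of the partial operations — before each manipulation one must confirm that $a'+b$, $(a'+b)\odot a$, $b+a'$ and $a'+(a\odot x)$ are all defined — and the correct instantiation of the variables in parts (v), (vi) and (vii) of Lemma~\ref{lem3}. Once those are settled there is no further obstacle. (One could alternatively deduce $x\leq a'+b$ for $x\in A$ from the adjointness of Theorem~\ref{th1}, but the direct route above avoids reintroducing the ACC.)
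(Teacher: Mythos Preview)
Your proof is correct and follows essentially the same route as the paper's: both show that $a'+b$ lies in the set $A$ (via $(a'+b)\odot a=b$, using Lemma~\ref{lem3}(v)) and that every $x\in A$ satisfies $x=a'+(x\odot a)\leq a'+b$ (via Lemma~\ref{lem3}(vi) and~(vii)), whence $a'+b=\max A$. Your version is just more explicit about verifying definedness of the partial operations, which the paper leaves implicit.
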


\begin{proof}
We have $b\leq a$ and $(a\rightsquigarrow b)\odot a=b$ according to Lemma~\ref{lem3}. If $x\in E$, $x\odot a$ is defined and $x\odot a\leq b$ then
\[
x=a'+(x\odot a)\leq a'+b=a\rightsquigarrow b
\]
according to Lemma~\ref{lem3}. Hence $a\rightsquigarrow b=a\rightarrow b$.
\end{proof}

Also for the implication $\rightsquigarrow$ we can show some kind of adjointness which is, however, only partial.

\begin{theorem}
Let $(E,+,{}',0,1)$ be an effect algebra and $a,b,c\in E$ and assume $a\odot b$ and $b\rightsquigarrow c$ to be defined. Then
\[
a\odot b\leq c\text{ if and only if }a\leq b\rightsquigarrow c.
\]
\end{theorem}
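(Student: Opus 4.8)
The plan is to prove both implications directly, relying on the explicit formula $b\rightsquigarrow c=b'+c$ together with the arithmetic identities in Lemma~\ref{lem3}, rather than on the adjointness of Theorem~\ref{th1} (which concerns $\rightarrow$ and requires the ACC, not assumed here). Since $b\rightsquigarrow c$ is defined we have $c\leq b$, and since $a\odot b$ is defined we have $a'\perp b'$, equivalently $a\odot b=(a'+b')'$ is a genuine element of $E$. These two defining conditions are what we will feed into Lemma~\ref{lem3}.

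For the forward direction, assume $a\odot b\leq c$. Because $a\odot b\leq a$ always holds (Lemma~\ref{lem3}(iv)) and $a\leq b$ is not assumed, I would instead recover $a$ from $a\odot b$ using Lemma~\ref{lem3}(vi): from $a\odot b\leq a$ we get $a=(a\odot b)+\big((a\odot b)'\odot a\big)$, which is not quite the shape needed. The cleaner route is Lemma~\ref{lem3}(v): since $a\odot b\leq a$, that identity gives $a\odot b=a\odot\big(a'+(a\odot b)\big)$; but what I actually want is an expression for $a$ in terms of $b$ and $a\odot b$. Here the key observation is that $a\odot b\leq b$ as well, and by Lemma~\ref{lem3}(v) applied to $a\odot b\leq b$ we get $a\odot b=b\odot\big(b'+(a\odot b)\big)$. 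Since $\odot$ is cancellative in its second argument when both sides are defined (Lemma~\ref{lem3}(x)), and $a\odot b=b\odot a$ by the definition of $\odot$ via the commutativity of $+$, we deduce $a=b'+(a\odot b)$. Now apply Lemma~\ref{lem4}(iv)-style monotonicity: from $a\odot b\leq c$ and Lemma~\ref{lem3}(vii) (monotonicity of $+$ on the left) we obtain $a=b'+(a\odot b)\leq b'+c=b\rightsquigarrow c$, as desired.

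For the converse, assume $a\leq b\rightsquigarrow c=b'+c$. I want $a\odot b\leq c$. Using Lemma~\ref{lem3}(viii) (monotonicity of $\odot$) with the inequality $a\leq b'+c$, and the fact that $a\odot b$ is defined, we get $a\odot b\leq (b'+c)\odot b$ provided $(b'+c)\odot b$ is defined — which it is, since $a\leq b'+c$ and $a\odot b$ defined forces $(b'+c)\odot b$ defined by Lemma~\ref{lem3}(viii). It then remains to show $(b'+c)\odot b\leq c$, indeed $(b'+c)\odot b=c$: this is exactly Lemma~\ref{lem3}(v) read in the form $c=b\odot(b'+c)$, valid because $c\leq b$. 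Chaining, $a\odot b\leq (b'+c)\odot b=c$.

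The main obstacle I anticipate is bookkeeping around \emph{definedness}: every application of Lemma~\ref{lem3}(vii),(viii) carries a hypothesis that the relevant sum or product is defined, and I must check that $a\leq b'+c$ propagates definedness of $(b'+c)\odot b$ from that of $a\odot b$, and similarly that $a=b'+(a\odot b)$ is legitimately formed (i.e.\ $b'\perp(a\odot b)$, which follows from $a\odot b\leq b$ hence $a\odot b\leq(b')'$). None of this is deep, but it is the place where a sloppy argument would break, so I would state each definedness check explicitly. A secondary point is justifying $a=b'+(a\odot b)$ cleanly; if the cancellation argument above feels roundabout, an alternative is to note $c\leq b$ gives $b'+c$ defined, set $x:=a$ in the computation $x\odot b$ defined, $x\odot b\leq b$, and invoke the identity $x=b'+(x\odot b)$ which holds for every $x$ with $x\odot b$ defined — this is precisely the identity used in the proof of Theorem~\ref{th2}, so it may simply be cited from there.
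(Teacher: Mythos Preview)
Your proof is correct and follows essentially the same line as the paper's: both directions hinge on the identities $a=b'+(a\odot b)$ and $(b'+c)\odot b=c$ together with monotonicity of $+$ and $\odot$ from Lemma~\ref{lem3}. The only cosmetic difference is that you derive $a=b'+(a\odot b)$ via Lemma~\ref{lem3}(v) plus cancellation (x), whereas the paper reads it off directly from Lemma~\ref{lem3}(vi) applied to $b'\leq a$; your own closing remark already points to this shortcut.
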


\begin{proof}
If $a\odot b\leq c$ then $a=b'+(a\odot b)\leq b'+c=b\rightsquigarrow c$ according to Lemma~\ref{lem3}. If, conversely, $a\leq b\rightsquigarrow c$ then $a\odot b\leq(b\rightsquigarrow c)\odot b=(b'+c)\odot b=c$ according to Lemma~\ref{lem3}.
\end{proof}

There is a further possibility how to define the connective implication in an effect algebra $\mathbf E=(E,+,{}',0,1)$ that need not be lattice-ordered, namely
\[
b\Rightarrow c:=b'+\Max L(b,c)
\]
($b,c\in E$, cf.\ \cite{CHL}). If $\mathbf E$ satisfies the ACC then $b\Rightarrow c\neq\emptyset$. This kind of implication is, of course, again an unsharp one since the result of $a\Rightarrow b$ need not be an element of the corresponding effect algebra, but may be a subset of it. On the other hand, we see that $\Rightarrow$ is an everywhere defined binary operator on $E$.

\begin{example}
The ``operation table'' of $\Rightarrow$ corresponding to the effect algebra from Example~\ref{ex1} looks as follows:
\[
\begin{array}{l|lllllllll}
\Rightarrow & 0  & a  & b  & c  & d  & c'       & b'        & a'       & 1 \\
\hline
0           & 1  & 1  & 1  & 1  & 1  & 1        & 1         & 1        & 1 \\
a           & a' & 1  & a' & a' & a' & 1        & 1         & a'       & 1 \\
b           & b' & b' & 1  & b' & 1  & 1        & 1         & 1        & 1 \\
c           & c' & c' & c' & 1  & c' & c'       & 1         & 1        & 1 \\
d           & d  & d  & b' & d  & 1  & b'       & 1         & b'       & 1 \\
c'          & c  & b' & a' & c  & a' & 1        & \{a',b'\} & a'       & 1 \\
b'          & b  & c' & d  & a' & b' & \{d,c'\} & 1         & \{d,a'\} & 1 \\
a'          & a  & a  & c' & b' & c' & c'       & \{b',c'\} & 1        & 1 \\
1           & 0  & a  & b  & c  & d  & c'       & b'        & a'       & 1
\end{array}
\]
\end{example}

The properties of the implication $\Rightarrow$ are very natural, see the following result.

\begin{lemma}\label{lem5}
Let $(E,+,{}',0,1)$ be an effect algebra satisfying the {\rm ACC} and $a,b,c\in E$. Then the following holds:
\begin{enumerate}[{\rm(i)}]
\item $a'\leq a\Rightarrow b\neq\emptyset$,
\item $a\Rightarrow0=a'$ and $1\Rightarrow a=a$,
\item $a\Rightarrow b=1$ if and only if $a\leq b$,
\item if $a\leq b$ then $c\Rightarrow a\leq_1c\Rightarrow b$.
\end{enumerate}
\end{lemma}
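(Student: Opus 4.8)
The plan is to mirror the structure of the proof of Lemma~\ref{lem1}, replacing the set $\{x\in E\mid x\odot a\text{ is defined and }x\odot a\leq b\}$ by the explicit expression $a\Rightarrow b=a'+\Max L(a,b)$. First I would fix notation: write $L(a,b)$ for the set of lower bounds of $\{a,b\}$, note $a,b\leq a$ and $b$ need not be comparable so $L(a,b)$ may have several maximal elements, and recall that since $0\in L(a,b)$ the set $L(a,b)$ is non-empty, hence by the ACC $\Max L(a,b)\neq\emptyset$, which already gives $a\Rightarrow b\neq\emptyset$ in part~(i). The key observation used repeatedly is that for every $m\in\Max L(a,b)$ we have $m\leq a$, so $a'+m$ is defined by Lemma~\ref{lem3}(vi) (or directly since $m\leq a$ means $m\perp a'$), and moreover $a'\leq a'+m$ by Lemma~\ref{lem3}(iv); taking the union over all such $m$ yields $a'\leq_1 a'+\Max L(a,b)=a\Rightarrow b$. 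In fact, because each element of $a\Rightarrow b$ has the form $a'+m$ with $m\geq 0$, every element dominates $a'$, so one even gets the stronger singleton-style statement $a'\leq a\Rightarrow b$ as written.

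For part~(ii), I would compute $a\Rightarrow 0=a'+\Max L(a,0)=a'+\Max\{0\}=a'+0=a'$ using Lemma~\ref{lem3}(ii), and $1\Rightarrow a=1'+\Max L(1,a)=0+\Max\{a\}=a$, since $L(1,a)=L(a)=\{x\mid x\leq a\}$ has unique maximum $a$. For part~(iii), the chain of equivalences is: $a\Rightarrow b=1$ iff $a'+\Max L(a,b)=\{1\}$; since the map $x\mapsto a'+x$ is defined and injective on lower bounds of $a$ (Lemma~\ref{lem3}(ix)) and $a'+a=1$, this holds iff $\Max L(a,b)=\{a\}$, i.e.\ iff $a$ itself is a lower bound of $\{a,b\}$, i.e.\ iff $a\leq b$.

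For part~(iv), assume $a\leq b$. Then $L(c,a)\subseteq L(c,b)$, and I want $c\Rightarrow a=c'+\Max L(c,a)\leq_1 c'+\Max L(c,b)=c\Rightarrow b$. Given any element $c'+m$ of the left-hand side with $m\in\Max L(c,a)\subseteq L(c,b)$, by the ACC there is some $n\in\Max L(c,b)$ with $m\leq n$; since $m,n\leq c$, both $c'+m$ and $c'+n$ are defined and $c'+m\leq c'+n$ by Lemma~\ref{lem3}(vii), and $c'+n\in c\Rightarrow b$, which is exactly $\leq_1$. The only mild subtlety here, and the one place I expect to have to be careful, is justifying that $m\leq n$ with both below $c$ lets us add $c'$ and preserve the order: this is precisely Lemma~\ref{lem3}(vii) applied with the roles $m\leq n$ and the defined sum $n+c'$, so it is routine but must be invoked rather than taken for granted. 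Everything else is a direct translation of the arguments already given for $\rightarrow$ in Lemma~\ref{lem1}, using Theorem~\ref{th2}-style identifications only implicitly; in fact, since the operation tables of $\rightarrow$ and $\Rightarrow$ coincide on this example, one could remark that the proofs run in parallel.
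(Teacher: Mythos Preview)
Your proof is correct and follows essentially the same route as the paper's: direct computation from the defining formula $a\Rightarrow b=a'+\Max L(a,b)$, using the ACC for non-emptiness, cancellation for part~(iii), and the inclusion $L(c,a)\subseteq L(c,b)$ together with monotonicity of $+$ for part~(iv). The only cosmetic remark is that your citation of Lemma~\ref{lem3}(vi) for the definedness of $a'+m$ is not quite the right item---the parenthetical ``$m\le a$ means $m\perp a'$'' is the actual justification---and your observation that in fact $a'\le a\Rightarrow b$ (not merely $\le_1$) is a nice sharpening of what the paper's own proof writes out.
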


\begin{proof}
\
\begin{enumerate}[(i)]
\item $a'\leq_1a\Rightarrow b$ follows from the definition of $a\Rightarrow b$ according to Lemma~\ref{lem3}. $a\Rightarrow b\neq\emptyset$ follows from the ACC.
\item We have $a\Rightarrow0=a'+\Max L(a,0)=a'$ and $1\Rightarrow a=1'+\Max L(1,a)=a$.
\item The following are equivalent: $a\Rightarrow b=1$; $a'+\Max L(a,b)=1$; $\Max L(a,b)=a$; $a\leq b$.
\item If $a\leq b$ then $L(c,a)\subseteq L(c,b)$ and hence $\Max L(c,a)\leq_1\Max L(c,b)$ whence
\[
c\Rightarrow a=c'+\Max L(c,a)\leq_1c'+\Max L(c,b)=c\Rightarrow b
\]
according to Lemma~\ref{lem3}.
\end{enumerate}
\end{proof}

Now we can compare all three implications considered here.

\begin{theorem}
Let $(E,+,{}',0,1)$ be an effect algebra and $a,b\in E$. Then the following holds:
\begin{enumerate}[{\rm(i)}]
\item $a\Rightarrow b\leq_1a\rightarrow b$.
\item If $a\rightsquigarrow b$ is defined then $a\rightarrow b=a\Rightarrow b=a\rightsquigarrow b$,
\end{enumerate}
\end{theorem}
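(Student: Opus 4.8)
The plan is to prove both parts by reducing them to the comparison results already established. For part (i), I would first recall that by definition $a\Rightarrow b=a'+\Max L(a,b)$ while $a\rightarrow b=\Max\{x\in E\mid x\odot a\text{ is defined and }x\odot a\leq b\}$. The natural strategy is to show that every element of $a\Rightarrow b$ lies below some element of $a\rightarrow b$, i.e.\ that $a\Rightarrow b\subseteq_{\leq}\{x\in E\mid x\odot a\text{ is defined and }x\odot a\leq b\}$ in the sense that each $a'+m$ with $m\in\Max L(a,b)$ actually belongs to that set. Indeed, for $m\in L(a,b)$ we have $m\leq a$, so $a'+m$ is defined (it equals $m\rightsquigarrow a$ read the other way, or just use that $m\leq a$ gives $m\leq a$ hence $a'\perp m$); and $(a'+m)\odot a=m$ by Lemma~\ref{lem3}(v)--(vi) applied to $m\leq a$, with $m\leq b$. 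Thus $a'+m$ belongs to the defining set of $a\rightarrow b$, and by the ACC (needed so that $\Max$ is reached) there is an element of $a\rightarrow b$ above it. This yields $a\Rightarrow b\leq_1a\rightarrow b$.

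For part (ii), I would invoke Theorem~\ref{th2}, which already gives $a\rightarrow b=a\rightsquigarrow b$ whenever $a\rightsquigarrow b$ is defined, i.e.\ whenever $b\leq a$. So it only remains to check $a\Rightarrow b=a\rightsquigarrow b$ under the hypothesis $b\leq a$. But if $b\leq a$ then $L(a,b)=L(b)$, so $\Max L(a,b)=\{b\}$ (since $b$ is the greatest element of $L(b)$), and therefore $a\Rightarrow b=a'+\{b\}=a'+b=a\rightsquigarrow b$. Chaining these two equalities gives $a\rightarrow b=a\Rightarrow b=a\rightsquigarrow b$, completing part (ii).

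One subtlety to flag: part (i) as stated does not assume the ACC, yet $\Max L(a,b)$ and the $\Max$ in the definition of $\rightarrow$ could in principle be empty without it. I expect the intended reading is that the statement is vacuous or trivial when these sets are empty, or that the hypothesis ``$(E,+,{}',0,1)$ be an effect algebra'' is meant to be read alongside the standing ACC assumption used throughout this section; in the write-up I would either add the ACC hypothesis explicitly or note that $\leq_1$ between the empty set and anything holds vacuously, so the inequality $a\Rightarrow b\leq_1a\rightarrow b$ survives the degenerate case. The genuinely computational heart of the argument is the identity $(a'+m)\odot a=m$ for $m\leq a$, but this is exactly Lemma~\ref{lem3}(v) (rewritten: $m=\big(a'+(m+a')'\big)'=(a'+m')'\odot\ldots$)—more cleanly, Lemma~\ref{lem3}(v) with the roles $a:=m$, $b:=a$ gives $m=a\odot(m+a')=(a'+m)\odot a$ after using commutativity of $\odot$, so no real obstacle remains there.

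The main obstacle, such as it is, will be bookkeeping the partiality: making sure at each step that the relevant sums and $\odot$-products are defined before manipulating them. Concretely, $a'+m$ needs $m\perp a'$, which follows from $m\leq a=(a')'$; and $(a'+m)\odot a$ needs $(a'+m)'\perp a'$, i.e.\ $(a'+m)'\leq a$, which again follows from $m\leq a$ via antitonicity of ${}'$ and Lemma~\ref{lem3}. Once these definedness checks are in place, both parts are short.
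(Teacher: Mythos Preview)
Your proposal is correct and follows essentially the same approach as the paper's proof: for (i) you take an arbitrary element $a'+m$ of $a\Rightarrow b$, verify via Lemma~\ref{lem3} that $(a'+m)\odot a=m\leq b$, and conclude it lies below a maximal element of the defining set of $a\rightarrow b$; for (ii) you invoke Theorem~\ref{th2} and the observation that $b\leq a$ forces $\Max L(a,b)=\{b\}$. Your additional bookkeeping on definedness and your flag about the missing ACC hypothesis are well taken---the paper's own proof tacitly relies on the ACC in exactly the way you identify.
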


\begin{proof}
\
\begin{enumerate}[(i)]
\item Let $c\in a\Rightarrow b$. Then there exists some $d\in\Max L(a,b)$ with $a'+d=c$. Now $c\odot a$ is defined and according to Lemma~\ref{lem3} we have $c\odot a=(a'+d)\odot a=d\leq b$ and hence $c\leq_1a\rightarrow b$.
\item Assume $a\rightsquigarrow b$ to be defined. That $a\rightarrow b=a\rightsquigarrow b$ was proven in Theorem~\ref{th2}. Because of $b\leq a$ we have
\[
a\Rightarrow b=a'+\Max L(a,b)=a'+b=a\rightsquigarrow b.
\]
\end{enumerate}
\end{proof}

There is the question how to define a binary operator $\otimes$ on an effect algebra $\mathbf E=(E,+,{}',0,$ $1)$ such that $\otimes$ and $\Rightarrow$ form an adjoint pair. For this purpose we define
\[
a\otimes b:=\Min U(a,b')\odot b
\]
($a,b\in E$). It is easy to see that
\begin{align*}
    a\otimes b & =(b\Rightarrow a')', \\
a\Rightarrow b & =(b'\otimes a)'
\end{align*}
for all $a,b\in E$. For subsets $A,B$ of $E$ we define
\begin{align*}
    A\otimes B & :=\{a\otimes b\mid a\in A,b\in B\}, \\
A\Rightarrow B & :=\{a\Rightarrow b\mid a\in A,b\in B\}.
\end{align*}

\begin{example}
The ``operation table'' of $\otimes$ corresponding to the effect algebra from Example~\ref{ex1} looks as follows:
\[
\begin{array}{l|lllllllll}
\otimes & 0 & a & b & c  & d & c'     & b'       & a'      & 1 \\
\hline
0       & 0 & 0 & 0 & 0 & 0 & 0       & 0        & 0       & 0 \\
a       & 0 & a & 0 & 0 & b & a       & \{d,a'\} & 0       & a \\
b       & 0 & 0 & 0 & 0 & 0 & \{a,b\} & 0        & \{b,c\} & b \\
c       & 0 & 0 & 0 & c & b & 0       & \{c,d\}  & c       & c \\
d       & 0 & a & 0 & c & 0 & a       & b        & c       & d \\
c'      & 0 & a & b & 0 & d & c'      & a        & b       & c' \\
b'      & 0 & a & 0 & c & b & a       & d        & c       & b' \\
a'      & 0 & 0 & b & c & d & b       & c        & a'      & a' \\
1       & 0 & a & b & c & d & c'      & b'       & a'      & 1
\end{array}
\]
\end{example}

Analogous to Lemma~\ref{lem5} we obtain

\begin{lemma}
Let $(E,+,{}',0,1)$ be an effect algebra satisfying the {\rm DCC} and $a,b,c\in E$. Then the following holds:
\begin{enumerate}[{\rm(i)}]
\item $\emptyset\neq a\otimes b\leq b$,
\item $a\otimes1=1\otimes a=a$,
\item $a\otimes b=0$ if and only if $a\perp b$,
\item if $a\leq b$ then $a\otimes c\leq_2b\otimes c$.
\end{enumerate}
\end{lemma}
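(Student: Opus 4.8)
The statement mirrors Lemma~\ref{lem5}, so the natural strategy is to dualize each clause through the identity $a\otimes b=(b\Rightarrow a')'$, which lets us import the already-proven facts about $\Rightarrow$ rather than redo the computations with $\Min$, $U$, and $\odot$ directly. First I would record the dual dictionary: the DCC for $\mathbf E$ is exactly the ACC for the order-reversed poset, and ${}'$ is an antitone involution by Lemma~\ref{lem3}(i), so $\Max L(x,y)$ and $\Min U(x,y)$ swap under $'$; also $A\leq_1 B$ becomes $B'\leq_2 A'$ and vice versa. With this in hand, each of (i)--(iv) should fall out of the corresponding clause of Lemma~\ref{lem5}.

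For (i): $a\otimes b\neq\emptyset$ follows because $b\Rightarrow a'\neq\emptyset$ under the ACC/DCC hypothesis (Lemma~\ref{lem5}(i)), and taking supplements preserves nonemptiness. For $a\otimes b\leq b$, I would either use the explicit definition $a\otimes b=\Min U(a,b')\odot b$ together with Lemma~\ref{lem3}(iv) (which gives $x\odot b\leq b$ for any defined product), or equivalently observe that $(b\Rightarrow a')=b'+\Max L(b,a')\geq_1 b'$ by Lemma~\ref{lem5}(i), and apply $'$ to get $a\otimes b\leq b$. For (ii): from Lemma~\ref{lem5}(ii), $1\Rightarrow a'=a'$ and $b\Rightarrow 0=b'$; applying $'$ and the definition of $\otimes$ gives $a\otimes 1=(1\Rightarrow a')'=a$ and $1\otimes b=(b\Rightarrow 0)'=b$, then rename. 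For (iii): $a\otimes b=0$ iff $(b\Rightarrow a')'=0$ iff $b\Rightarrow a'=1$ iff $b\leq a'$ (Lemma~\ref{lem5}(iii)) iff $a\perp b$ by the definition of orthogonality combined with Lemma~\ref{lem3}(i) giving $b\leq a'\Leftrightarrow a\leq b'$. For (iv): if $a\leq b$ then $b'\leq a'$, so by Lemma~\ref{lem5}(iv) applied with the roles set up as $c\Rightarrow b'\leq_1 c\Rightarrow a'$; taking supplements turns $\leq_1$ into $\leq_2$ and reverses the inclusion appropriately, yielding $a\otimes c\leq_2 b\otimes c$ after translating back.

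The only genuine point of care — the part most likely to need a careful line rather than a one-liner — is bookkeeping the direction reversals: $\leq_1$ versus $\leq_2$ under $'$, and which argument of $\Rightarrow$ gets the monotone versus antitone treatment when deriving (iv), since $a\otimes b=(b\Rightarrow a')'$ has $a$ sitting in the \emph{second} slot of $\Rightarrow$. I would state once, as a preliminary remark, that for $A,B\subseteq E$ one has $A\leq_1 B$ iff $\{b'\mid b\in B\}\leq_2\{a'\mid a\in A\}$, and then each dualization step is mechanical. An alternative, fully self-contained route is to argue directly from $a\otimes b=\Min U(a,b')\odot b$: clause (i) from Lemma~\ref{lem3}(iv) and the DCC; (ii) from $U(a,1)=\{1\}$, $U(1,b')=\{1\}$ hence $\Min=\{1\}$ and $1\odot x=x$; (iii) from $a\otimes b=0$ iff some minimal upper bound $u$ of $a,b'$ satisfies $u\odot b=0$, i.e.\ $u\leq b'$, forcing $u=b'\geq a$, i.e.\ $a\leq b'$; (iv) from $a\leq b\Rightarrow U(b,c')\subseteq U(a,c')\Rightarrow\Min U(a,c')\leq_2\Min U(b,c')$ and then Lemma~\ref{lem3}(viii). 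Either way the proof is short; I would present the dualization version for brevity since it explicitly parallels Lemma~\ref{lem5}.
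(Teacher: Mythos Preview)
Your proposal is correct and matches the paper's approach exactly: the paper gives no proof at all, merely prefacing the lemma with ``Analogous to Lemma~\ref{lem5} we obtain'', which is precisely the dualization via $a\otimes b=(b\Rightarrow a')'$ that you spell out in detail. Your alternative direct route from the definition $a\otimes b=\Min U(a,b')\odot b$ is also sound and arguably cleaner for (i) and (iv), since it sidesteps the bookkeeping about invoking Lemma~\ref{lem5} under a DCC rather than an ACC hypothesis.
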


We can show that also the operators $\otimes$ and $\Rightarrow$ form an adjoint pair.

\begin{theorem}
Let $(E,+,{}',0,1)$ be an effect algebra satisfying both the {\rm ACC} and the {\rm DCC} and $a,b,c\in E$. Then the following holds:
\begin{enumerate}[{\rm(i)}]
\item $a\otimes b\sqsubseteq c$ if and only if $a\sqsubseteq b\Rightarrow c$ {\rm(}{\em adjointness}{\rm)},
\item $(a\Rightarrow b)\otimes a=\Max L(a,b)$ {\rm(}{\em divisibility}{\rm)}.
\end{enumerate}
\end{theorem}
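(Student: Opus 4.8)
The plan is to derive both items from two small identities contained in Lemma~\ref{lem3}: for $d\le b$ one has $(b'+d)\odot b=d$ (this is Lemma~\ref{lem3}(v), using commutativity of $\odot$), and for $b'\le u$ one has $b'+(u\odot b)=u$ (this is Lemma~\ref{lem3}(vi)). Definedness of all the partial operations that turn up will be guaranteed by Lemma~\ref{lem3}(iv) ($a\odot b\le a,b$ and $a,b\le a+b$), which in particular gives $b'\le b'+d$ whenever $d\le b$, and $u\odot b\le b$ whenever $b'\le u$. These observations, plus the monotonicity statements Lemma~\ref{lem3}(vii),(viii), are all that is needed.

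For item (i) I would prove the two implications directly. Suppose $a\otimes b\sqsubseteq c$, i.e.\ there is $u\in\Min U(a,b')$ with $u\odot b\le c$. Since also $u\odot b\le b$, the element $u\odot b$ lies in $L(b,c)$, so by the ACC there is $d\in\Max L(b,c)$ with $u\odot b\le d$. Then monotonicity of $+$ together with the identity $b'+(u\odot b)=u$ gives $a\le u=b'+(u\odot b)\le b'+d$, and $b'+d\in b\Rightarrow c$; hence $a\sqsubseteq b\Rightarrow c$. Conversely, suppose $a\sqsubseteq b\Rightarrow c$, i.e.\ there is $d\in\Max L(b,c)$ with $a\le b'+d$. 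Because $b'\le b'+d$, the element $b'+d$ lies in $U(a,b')$, so by the DCC there is $u\in\Min U(a,b')$ with $u\le b'+d$; then monotonicity of $\odot$ and the identity $(b'+d)\odot b=d$ give $u\odot b\le(b'+d)\odot b=d\le c$, so $a\otimes b\sqsubseteq c$.

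For item (ii) I would simply compute the left-hand side. By definition $a\Rightarrow b=\{a'+d\mid d\in\Max L(a,b)\}$, and for each such $d$ we have $a'\le a'+d$, so $U(a'+d,a')$ consists precisely of the elements above $a'+d$; hence its least element is $a'+d$ and $\Min U(a'+d,a')=\{a'+d\}$. Consequently $(a'+d)\otimes a=(a'+d)\odot a=d$ by the identity $(b'+d)\odot b=d$ (with $b:=a$). Taking the union over $d\in\Max L(a,b)$ yields $(a\Rightarrow b)\otimes a=\Max L(a,b)$. I expect the only point needing care to be in item (i): one must make sure the witness really lies in $\Min U(a,b')$ (resp.\ $\Max L(b,c)$) rather than merely below (resp.\ above) such an element, and this is exactly what forces \emph{both} chain conditions into the hypothesis. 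Beyond this bookkeeping and the routine checking of definedness, I anticipate no serious obstacle — the two identities above carry the whole argument.
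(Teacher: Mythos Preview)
Your proposal is correct and follows essentially the same route as the paper. For (ii) your computation is verbatim the paper's; for (i) the paper writes a single cyclic chain of implications while you split it into the two directions, but both arguments rest on the same two identities $u=b'+(u\odot b)$ for $b'\le u$ and $(b'+d)\odot b=d$ for $d\le b$ from Lemma~\ref{lem3}, together with the ACC/DCC to land in $\Max L(b,c)$ respectively $\Min U(a,b')$.
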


\begin{proof}
\
\begin{enumerate}[(i)]
\item According to Lemma~\ref{lem3} any of the following statements implies the next one:
\begin{align*}
                                                           a\otimes b & \sqsubseteq c, \\
                                                  \Min U(a,b')\odot b & \sqsubseteq c, \\
       \text{there exists some }d\in\Min U(a,b')\odot b\text{ with }d & \leq c, \\
       \text{there exists some }d\in\Min U(a,b')\odot b\text{ with }d & \in L(b,c), \\
       \text{there exists some }d\in\Min U(a,b')\odot b\text{ with }d & \sqsubseteq\Max L(b,c), \\
                                                  \Min U(a,b')\odot b & \sqsubseteq \Max L(b,c), \\
                                    b'+\big(\Min U(a,b')\wedge b\big) & \sqsubseteq b'+\Max L(b,c), \\
                                                    a\leq\Min U(a,b') & \sqsubseteq b\Rightarrow c, \\
                                                                    a & \sqsubseteq b\Rightarrow c, \\
                                                                    a & \sqsubseteq b'+\Max L(b,c), \\
           \text{there exists some }e\in b'+\Max L(b,c)\text{ with }a & \leq e, \\   
           \text{there exists some }e\in b'+\Max L(b,c)\text{ with }e & \in U(a,b'), \\   
\text{there exists some }e\in b'+\Max L(b,c)\text{ with }\Min U(a,b') & \sqsubseteq e, \\   
                                                         \Min U(a,b') & \sqsubseteq b'+\Max L(b,c), \\
                                                  \Min U(a,b')\odot b & \sqsubseteq\big(b'+\Max L(b,c)\big)\odot b, \\
                                                           a\otimes b & \sqsubseteq\Max L(b,c)\leq c, \\
                                                           a\otimes b & \sqsubseteq c.
\end{align*}
\item Since $\Max L(a,b)\leq a$ we can apply (v) of Lemma~\ref{lem3} in order to compute
\begin{align*}
(a\Rightarrow b)\otimes a & =\bigcup\{x\otimes a\mid x\in a\Rightarrow b\}= \\
                          & =\bigcup\{\Min U(x,a')\odot a\mid x\in a'+\Max L(a,b)\}= \\
                          & =\bigcup\{\Min U(a'+y,a')\odot a\mid y\in\Max L(a,b)\}= \\
                          & =\bigcup\{(a'+y)\odot a\mid y\in\Max L(a,b)\}=\bigcup\{\{y\}\mid y\in\Max L(a,b)\}= \\
                          & =\Max L(a,b).
\end{align*}
\end{enumerate}
\end{proof}

The previous result can be generalized for subsets of $E$. This more general result will be used in the next section.

\begin{lemma}\label{lem8}
Let $(E,+,{}',0,1)$ be an effect algebra satisfying both the {\rm ACC} and the {\rm DCC} and $A,B,C\in\mathcal P_+A$. Then $A\otimes B\sqsubseteq C$ is equivalent to $A\sqsubseteq B\Rightarrow C$ {\rm(}{\em adjointness}{\rm)}
\end{lemma}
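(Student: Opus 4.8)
The plan is to reduce the subset statement to the element-wise adjointness already proved for the operators $\otimes$ and $\Rightarrow$ (part (i) of the preceding theorem), by unravelling how $\sqsubseteq$ interacts with the set-valued operations $A\otimes B=\{a\otimes b\mid a\in A,b\in B\}$ and $B\Rightarrow C=\{b\Rightarrow c\mid b\in B,c\in C\}$. The key observation is that for subsets $X,Y$ of $\mathcal P_+E$ the relation $\bigcup X\sqsubseteq\bigcup Y$ holds if and only if there exist $x\in X$ and $y\in Y$ with $x\sqsubseteq y$ (since $\sqsubseteq$ only asks for the existence of one comparable pair), so $A\otimes B\sqsubseteq C$ is equivalent to: there exist $a\in A$ and $b\in B$ with $(a\otimes b)\sqsubseteq C$, and similarly $A\sqsubseteq B\Rightarrow C$ is equivalent to: there exist $b\in B$ and $c\in C$ with $a\sqsubseteq(b\Rightarrow c)$ for some $a\in A$.

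First I would make precise the identity $M\sqsubseteq N\Leftrightarrow(\exists m\in M)\,\{m\}\sqsubseteq N$ and its dual, and use it to rewrite $A\otimes B\sqsubseteq C$ as ``$\exists a\in A,\ b\in B$ with $a\otimes b\sqsubseteq c$ for some $c\in C$'', i.e.\ ``$\exists a\in A,\ b\in B,\ c\in C$ with $a\otimes b\sqsubseteq c$''. Then I would apply the element-wise adjointness $a\otimes b\sqsubseteq c\iff a\sqsubseteq b\Rightarrow c$ (part (i) of the theorem above) inside the existential quantifiers, obtaining ``$\exists a\in A,\ b\in B,\ c\in C$ with $a\sqsubseteq b\Rightarrow c$''. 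Finally I would fold this back up: this last condition says precisely that some element of $A$ is $\sqsubseteq$-below some element of $\bigcup\{b\Rightarrow c\mid b\in B,c\in C\}=B\Rightarrow C$, which is by definition $A\sqsubseteq B\Rightarrow C$.

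The steps to carry out, in order, are: (1) record the characterisation of $\sqsubseteq$ between unions of families of non-empty sets in terms of existence of a single comparable pair; (2) expand both sides of the claimed equivalence using the definitions of $A\otimes B$ and $B\Rightarrow C$ together with (1), reducing each to a statement of the form ``$\exists$ a suitable triple $(a,b,c)$''; (3) invoke the element-wise adjointness theorem to pass between the two triple-statements; (4) reassemble. Since the element-wise case already required the ACC and the DCC (to guarantee $\Min U(a,b')$ and $\Max L(b,c)$ are non-empty and to run the chain of implications), these hypotheses are inherited and no new difficulty arises there.

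The only genuine care needed — and the one place I expect a referee to look — is step (1): one must check that $\sqsubseteq$ really does commute with arbitrary unions in the stated way and that no non-emptiness hypothesis is being silently used beyond $A,B,C\in\mathcal P_+E$ (which is exactly why the hypothesis $A,B,C\in\mathcal P_+E$ appears). Everything else is a purely formal manipulation of quantifiers, so I would keep the write-up to a short displayed chain of equivalences with a one-line justification citing the element-wise adjointness and the definition of $\sqsubseteq$.
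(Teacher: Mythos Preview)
Your proposal is correct and is essentially the same argument as the paper's: both proofs unpack $A\otimes B\sqsubseteq C$ and $A\sqsubseteq B\Rightarrow C$ via the definition of $\sqsubseteq$ to an existential statement about a triple $(a,b,c)\in A\times B\times C$, invoke the element-wise adjointness $a\otimes b\sqsubseteq c\iff a\sqsubseteq b\Rightarrow c$ from the preceding theorem, and then reassemble. The paper simply writes out the two directions explicitly rather than phrasing step~(1) as a general fact about $\sqsubseteq$ and unions, but the content is identical.
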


\begin{proof}
First assume $A\otimes B\sqsubseteq C$. Then there exist $a\in A$, $b\in B$, $c\in C$ and $d\in a\otimes b$ with $d\leq c$. Hence $a\otimes b\sqsubseteq c$. By adjointness this implies $a\sqsubseteq b\Rightarrow c$, i.e.\ there exists some $e\in b\Rightarrow c$ with $a\leq e$. Since $a\in A$ and $e\in B\Rightarrow C$ we conclude $A\sqsubseteq B\Rightarrow C$. Conversely, assume $A\sqsubseteq B\Rightarrow C$. Then there exist $a\in A$, $b\in B$, $c\in C$ and $e\in b\Rightarrow c$ with $a\leq e$. Hence $a\sqsubseteq b\Rightarrow c$. By adjointness this implies $a\otimes b\sqsubseteq c$, i.e.\ there exists some $f\in a\otimes b$ with $f\leq c$. Since $f\in A\otimes B$ and $c\in C$ we conclude $A\otimes B\sqsubseteq C$.
\end{proof}

The following assertion will be useful in the sequel.

\begin{lemma}\label{lem6}
Let $(E,+,{}',0,1)$ be an effect algebra satisfying both the {\rm ACC} and the {\rm DCC} and $p,q\in E^T$. Then
\begin{enumerate}[{\rm(i)}]
\item $p\leq q\Rightarrow(p\otimes q)$,
\item $(p\Rightarrow q)\otimes p\leq q$.
\end{enumerate}
\end{lemma}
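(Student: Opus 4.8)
\textbf{Proof proposal for Lemma~\ref{lem6}.}

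The plan is to deduce both parts from the subset-valued adjointness of Lemma~\ref{lem8}, applied pointwise in the time coordinate. First I would fix $t\in T$ and set $A:=\{p(t)\}$, $B:=\{q(t)\}$; these are nonempty singleton subsets of $E$, so Lemma~\ref{lem8} applies. Recall the convention that singletons are identified with their elements and that for singletons $X,Y$ the relation $X\leq Y$ in the sense of subsets coincides with the order on the elements; moreover $X\sqsubseteq Y$ also coincides with $X\leq Y$ when, say, $X$ is a singleton and $Y$ is an antichain (one picks the witnessing element of $Y$). The key point is that $p\otimes q$ and $p\Rightarrow q$ are computed coordinatewise, i.e.\ $(p\otimes q)(t)=p(t)\otimes q(t)$ and $(q\Rightarrow(p\otimes q))(t)=q(t)\Rightarrow(p(t)\otimes q(t))$, and the ordering $\leq$ on $E^T$ is the coordinatewise (subset) ordering; so it suffices to prove the pointwise statements for arbitrary elements $x:=p(t)$, $y:=q(t)$ of $E$.

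For (i), I want $x\leq_1 y\Rightarrow(x\otimes y)$, equivalently $\{x\}\sqsubseteq\{y\}\Rightarrow\{x\otimes y\}$ after noting that $y\Rightarrow(x\otimes y)$ is an antichain so $\leq$ and $\sqsubseteq$ agree here for a singleton on the left. By Lemma~\ref{lem8} this is equivalent to $\{x\}\otimes\{y\}\sqsubseteq\{x\otimes y\}$, i.e.\ $x\otimes y\sqsubseteq x\otimes y$, which holds trivially (every element of $x\otimes y$ is below itself — more precisely, there is some $d\in x\otimes y$ with $d\leq d$, and $x\otimes y\neq\emptyset$ by the DCC). For (ii), I want $(p\Rightarrow q)\otimes p\leq q$, i.e.\ pointwise $(x\Rightarrow y)\otimes x\leq y$; but the divisibility identity proved just above Lemma~\ref{lem8} gives $(x\Rightarrow y)\otimes x=\Max L(x,y)$, and every element of $\Max L(x,y)$ lies in $L(x,y)$, hence is $\leq y$. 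Therefore $(x\Rightarrow y)\otimes x\leq y$ as a subset relation, and assembling over all $t$ yields $(p\Rightarrow q)\otimes p\leq q$ in $E^T$.

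The only genuine subtlety — and the step I would be most careful about — is bookkeeping the difference between $\leq$, $\leq_1$, $\sqsubseteq$ for subsets and making sure the coordinatewise reduction is legitimate: one must check that the operators $\otimes$ and $\Rightarrow$ on $E^T$ really are defined coordinatewise and that the order on $E^T$ is the coordinatewise subset order (this should be fixed by a definition preceding the lemma, since $E^T$ has just been introduced). Once that is in place, part (ii) is immediate from divisibility and part (i) is immediate from the reflexivity of $\sqsubseteq$ on a nonempty set together with Lemma~\ref{lem8}; no further computation in the effect algebra is needed. If for some reason one prefers to avoid Lemma~\ref{lem8}, part (i) can instead be obtained directly from the definition $y\Rightarrow(x\otimes y)=y'+\Max L(y,x\otimes y)$ together with $x\otimes y\leq y$ (so $x\otimes y\in L(y,x\otimes y)$) and Lemma~\ref{lem3}(iv),(viii), but routing through adjointness is cleaner.
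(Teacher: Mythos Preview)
Your argument for (ii) is fine and is exactly the paper's approach: the paper also reduces to the divisibility identity $(p\Rightarrow q)\otimes p=\Max L(p,q)\leq q$, just writing out the computation explicitly rather than citing it.

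Your argument for (i), however, has a genuine gap. The lemma asserts $p\leq q\Rightarrow(p\otimes q)$ in the strong subset sense of Section~3, namely $p\leq z$ for \emph{every} $z\in q\Rightarrow(p\otimes q)$. Your route through Lemma~\ref{lem8} only yields $\{p\}\sqsubseteq q\Rightarrow(p\otimes q)$, i.e.\ $p\leq z$ for \emph{some} such $z$. The claim that ``$y\Rightarrow(x\otimes y)$ is an antichain so $\leq$ and $\sqsubseteq$ agree here for a singleton on the left'' is false: for a singleton $\{x\}$ and an antichain $S$ with $|S|>1$, the relations $\{x\}\leq S$ and $\{x\}\sqsubseteq S$ do not coincide (take $x\in S$). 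What does coincide for a singleton on the left is $\leq_1$ and $\sqsubseteq$, but that is not what the lemma states, and the strong $\leq$ is actually what is used downstream (e.g.\ in Theorem~\ref{th3}(ii) one feeds $p\leq\varphi\big(q\Rightarrow(p\otimes q)\big)$ into the monotonicity of Proposition~\ref{prop1}(iii), which requires the full $\leq$, not merely $\leq_1$).

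Ironically, the ``alternative'' you sketch at the end is the correct argument and is precisely what the paper does: since every $r\in p\otimes q=\Min U(p,q')\odot q$ satisfies $r\leq q$, one has $\Max L(q,r)=\{r\}$ and hence $q\Rightarrow r=\{q'+r\}$; then Lemma~\ref{lem3}(vi) gives $q'+(s\odot q)=s$ for each $s\in\Min U(p,q')$, so $q\Rightarrow(p\otimes q)=\Min U(p,q')$ as sets, and $p$ lies below every element of $\Min U(p,q')$. That is the step you need to recover the strong $\leq$; the adjointness shortcut does not deliver it.
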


\begin{proof}
\
\begin{enumerate}[(i)]
\item Since $q'\leq\Min U(p,q')$ and since $r\in\Min U(p,q')\odot q$ implies $r\leq q$ and hence $\Max L(q,r)=r$, we have
\begin{align*}
p & \leq\Min U(p,q')=q'+\big(\Min U(p,q')\odot q\big)=\bigcup\{q'+r\mid r\in\Min U(p,q')\odot q\}= \\
  & =\bigcup\{q'+\Max L(q,r)\mid r\in\Min U(p,q')\odot q\}= \\
	& =\bigcup\{q\Rightarrow r\mid r\in\Min U(p,q')\odot q\}=q\Rightarrow\big(\Min U(p,q')\odot q\big)=q\Rightarrow(p\otimes q)
\end{align*}
according to Lemma~\ref{lem3}
\item Since $r\in p'+\Max L(p,q)$ implies $p'\leq r$ and hence $\Min U(r,p')=r$, and since $\Max L(p,q)\leq p$ we have
\begin{align*}
(p\Rightarrow q)\otimes p & =\big(p'+\Max L(p,q)\big)\otimes p=\bigcup\{r\otimes p\mid r\in p'+\Max L(p,q)\}= \\
                          & =\bigcup\{\Min U(r,p')\odot p\mid r\in p'+\Max L(p,q)\}= \\
                          & =\bigcup\{r\odot p\mid r\in p'+\Max L(p,q)\}=\big(p'+\Max L(p,q)\big)\odot p= \\
												  & =\Max L(p,q)\leq q
\end{align*}
according to Lemma~\ref{lem3}.
\end{enumerate}
\end{proof}

\section{Tense operators}

For the theory of tense operators in algebraic structures see the monograph \cite{CP15a}.

Usually, the {\em tense operators} $P$, $F$, $H$, $G$ are considered. Their meaning is as follows:
\begin{align*}
P & \ldots\text{``It has at some time been the case that''}, \\
F & \ldots\text{``It will at some time be the case that''}, \\
H & \ldots\text{``It has always been the case that''}, \\
G & \ldots\text{``It will always be the case that''}.
\end{align*}
Assume that an effect algebra $(E,+,{}',0,1)$ is given as well as a non-empty time set $T$. We will consider the elements $p$ of $E^T$ as time-depending events, i.e.\ for $t\in T$, the symbol $p(t)$ denotes the value of the event $p$ at time $t$. It is easy to see that the operators $P$ and $F$ are in fact existential quantifiers over the past and future segment of $T$, respectively, similarly $H$ and $G$ are universal quantifiers over the corresponding segments. From this it is evident that for each $t\in T$ and every $p\in E^T$ we have
\begin{align*}
p(t)\leq P(p)(t) & \text{ and }p(t)\leq F(p)(t), \\
H(p)(t)\leq p(t) & \text{ and }G(p)(t)\leq p(t),
\end{align*}
shortly
\[
H(p)\leq p\leq P(p)\text{ and }G(p)\leq p\leq F(p).
\]
In order to be able to distinguish between ``past'' and ``future'' with respect to the time set $T$, we consider a so-called time-preference relation, i.e.\ a non-empty binary relation $R$ on $T$, and the couple $(T,R)$ will be called a time frame. For $s,t\in T$ with $s\mathrel Rt$ we say that ``$s$ is before $t$'' or ``$t$ is after $s$''.

For our purposes in this paper we will consider only so-called {\em serial relations} (see \cite{CP15a}), i.e.\ relations $R$ such that for each $s\in T$ there exist some $r,t\in T$ with $r\mathrel Rs$ and $s\mathrel Rt$.

Of course, if $R$ is reflexive then it is serial. Usually, $R$ is considered to be a partial order relation or a quasi-order.

In the following let $\mathbf E=(E,+,{}',0,1)$ be an effect algebra satisfying both the ACC and the DCC. We consider the set $E^T$ of all time-depending propositions on $\mathbf E$. We define the tense operators $P$, $F$, $H$ and $G$ on $\mathbf E$ to be the following mappings from $\mathcal P_+(E^T)$ to $(\mathcal P_+E)^T$:
\begin{align*}
P(B)(s) & :=\Min U\big(\{q(t)\mid q\in B\text{ and }t\mathrel Rs\}\big), \\
F(B)(s) & :=\Min U\big(\{q(t)\mid q\in B\text{ and }s\mathrel Rt\}\big), \\
H(B)(s) & :=\Max L\big(\{q(t)\mid q\in B\text{ and }t\mathrel Rs\}\big), \\
G(B)(s) & :=\Max L\big(\{q(t)\mid q\in B\text{ and }s\mathrel Rt\}\big)
\end{align*}
for all $B\in\mathcal P_+(E^T)$ and all $s\in T$. The {\em operators} defined in this way are said to be {\em induced} by $(T,R)$.

The definition of $P$, $F$, $H$ and $G$ shows some duality between $P$ and $H$ and between $F$ and $G$. In the sequel we therefore formulate and prove some results only of $H$ and $G$. The corresponding results for $P$ and $F$ are dual, in particular, $P(q)=H(q')'$ and $F(q)=G(q')'$ for each $q\in E^T$.

We want to define the composition of two tense operators. For this reason we need the following so-called {\em transformation function} $\varphi\colon(\mathcal P_+A)^T\rightarrow\mathcal P_+(A^T)$ defined by
\[
\varphi(x):=\{q\in A^T\mid q(t)\in x(t)\text{ for all }t\in T\}\text{ for all }x\in(\mathcal P_+A)^T. 
\]
We now define
\[
X*Y:=X\circ\varphi\circ Y
\]
for all tense operators $X,Y\in\{P,F,H,G\}$.

\begin{example}\label{ex2}
Consider the effect algebra from Example~\ref{ex1} with time frame $(T,R):=(\{1,2,3\},\leq)$ and the propositions $p$ and $q$ defined by
\[
\begin{array}{r|l|l|l}
   t & 1  & 2  & 3 \\
\hline
p(t) & a' & c' & a' \\
q(t) & b' & b' & c'
\end{array}
\]
Then we have
\[
\begin{array}{r|l|l|l}
                                    t & 1         & 2         & 3 \\
\hline
                                 p(t) & a'        & c'        & a' \\
                                 q(t) & b'        & b'        & c' \\
                      (p\otimes q)(t) & c         & a         & b \\
                      (q\otimes p)(t) & c         & a         & b \\
                  (p\Rightarrow q)(t) & \{b',c'\} & \{a',b'\} & c' \\
                  (q\Rightarrow p)(t) & \{d,a'\}  & \{d,c'\}  & a' \\
                              G(p)(t) & b         & b         & a' \\
                              G(q)(t) & \{a,b\}   & \{a,b\}   & c' \\
        \big(G(p)\otimes G(q)\big)(t) & 0         & 0         & b \\
        \big(G(q)\otimes G(p)\big)(t) & 0         & 0         & b \\
    \big(G(p)\Rightarrow G(q)\big)(t) & \{b',1\}  & \{b,1\}   & c' \\
    \big(G(q)\Rightarrow G(p)\big)(t) & \{a',1\}  & \{a',1\}  & a' \\
    G\big(\varphi(p\otimes q)\big)(t) & 0         & 0         & b \\
    G\big(\varphi(q\otimes p)\big)(t) & 0         & 0         & b \\
G\big(\varphi(p\Rightarrow q)\big)(t) & b         & b         & c' \\
G\big(\varphi(q\Rightarrow p)\big)(t) & b         & b         & a' \\
                              H(p)(t) & a'        & b         & b \\
                              H(q)(t) & b'        & b'        & \{a,b\} \\
        \big(H(p)\otimes H(q)\big)(t) & c         & 0         & 0 \\
        \big(H(q)\otimes H(p)\big)(t) & c         & 0         & 0 \\
    \big(H(p)\Rightarrow H(q)\big)(t) & \{b',c'\} & 1         & \{b',1\} \\
    \big(H(q)\Rightarrow H(p)\big)(t) & \{d,a'\}  & d         & \{a',1\} \\
      H\big(\varphi(p\odot q)\big)(t) & c         & 0         & 0 \\
      H\big(\varphi(q\odot p)\big)(t) & c         & 0         & 0 \\
H\big(\varphi(p\Rightarrow q)\big)(t) & \{a,b\}   & b         & b \\
H\big(\varphi(q\Rightarrow p)\big)(t) & b         & b         & b
\end{array}
\]
\end{example}

\begin{lemma}\label{lem7}
{\rm(}cf.\ {\rm\cite{CL1})} If $(E,+,{}',0,1)$ is an effect algebra, $T$ is a time set, $p\in A^T$ and $x,y\in(\mathcal P_+A)^T$ then
\begin{enumerate}[{\rm(i)}]
\item $\varphi$ is injective,
\item if $z(t):=\{p(t)\}$ for all $t\in T$ then $\varphi(z)=\{p\}$,
\item we have $x\leq y$ if and only if $\varphi(x)\leq\varphi(y)$.
\end{enumerate}
\end{lemma}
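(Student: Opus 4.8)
# Proof Plan for Lemma~\ref{lem7}

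The plan is to verify the three assertions in sequence, each being a direct consequence of the definition of the transformation function $\varphi$. Recall that $\varphi(x)=\{q\in A^T\mid q(t)\in x(t)\text{ for all }t\in T\}$, so an element of $\varphi(x)$ is just a ``choice function'' picking one value $q(t)$ from each fibre $x(t)$.

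For (i), I would argue by contraposition. Suppose $x\neq y$ in $(\mathcal P_+A)^T$; then there is some $t_0\in T$ with $x(t_0)\neq y(t_0)$, so without loss of generality there is an element $c\in x(t_0)\setminus y(t_0)$. Using the fact that each $x(t)$ is non-empty (this is where the subscript ``$+$'' in $\mathcal P_+A$ is essential), choose an arbitrary value in each fibre of $x$, but take value $c$ at $t_0$; this defines a map $q\in A^T$ with $q\in\varphi(x)$. Since $q(t_0)=c\notin y(t_0)$, we have $q\notin\varphi(y)$, hence $\varphi(x)\neq\varphi(y)$. This shows $\varphi$ is injective.

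For (ii), with $z(t)=\{p(t)\}$ for all $t$, a map $q\in A^T$ lies in $\varphi(z)$ iff $q(t)\in\{p(t)\}$ for all $t$, i.e.\ iff $q(t)=p(t)$ for all $t$, i.e.\ iff $q=p$; thus $\varphi(z)=\{p\}$. For (iii), recall that for $x,y\in(\mathcal P_+A)^T$ the relation $x\leq y$ means (fibrewise) $x(t)\leq y(t)$, which by the convention on subsets of a poset means $u\leq v$ for all $u\in x(t)$ and all $v\in y(t)$; likewise $\varphi(x)\leq\varphi(y)$ means $q\leq r$ in $A^T$ (i.e.\ $q(t)\leq r(t)$ for all $t$) for all $q\in\varphi(x)$ and all $r\in\varphi(y)$. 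The forward direction is immediate: if every element of $x(t)$ lies below every element of $y(t)$, then for any $q\in\varphi(x)$, $r\in\varphi(y)$, and any $t$, we have $q(t)\in x(t)$ and $r(t)\in y(t)$, so $q(t)\leq r(t)$. For the converse, suppose $x\not\leq y$; then there are $t_0\in T$, $u\in x(t_0)$, $v\in y(t_0)$ with $u\not\leq v$. Build $q\in\varphi(x)$ taking value $u$ at $t_0$ and $r\in\varphi(y)$ taking value $v$ at $t_0$ (again using non-emptiness of the remaining fibres to make arbitrary choices); then $q(t_0)=u\not\leq v=r(t_0)$, so $\varphi(x)\not\leq\varphi(y)$.

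None of these steps presents a genuine obstacle; the only subtlety worth flagging is the repeated use of non-emptiness of the fibres (the domain being $(\mathcal P_+A)^T$ rather than $(2^A)^T$), which is what makes $\varphi(x)$ non-empty and allows the selection arguments in (i) and the converse of (iii) to go through. If one wishes to be careful about the selection, one may invoke the axiom of choice for an arbitrary index set $T$, though for the applications in this paper $T$ is typically finite and no such appeal is needed.
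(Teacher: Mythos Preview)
Your argument is correct and complete: each of the three items is handled by the obvious choice-function reasoning, and you rightly flag that non-emptiness of the fibres (the ``$+$'' in $\mathcal P_+A$) is what makes the selection in (i) and in the converse of (iii) work. Note, however, that the paper does not actually give its own proof of this lemma; it is stated with a reference to~\cite{CL1} and left unproved here, so there is nothing to compare your route against beyond observing that your direct verification is the natural one.
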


For every $q\in E^T$ let $q'\in E^T$ be defined by $q'(t):=\big(q(t)\big)'$ for all $t\in T$. For every $B\in\mathcal P_+(E^T)$ let $B'\in\mathcal P_+(E^T)$ be defined by $B':=\{q'\mid q\in B\}$. For every $x\in(\mathcal P_+E)^T$ let $x'\in(\mathcal P_+E)^T$ be defined by $x'(t):=\big(x(t)\big)'$ for all $t\in T$.

\begin{proposition}\label{prop1}
{\rm(}cf.\ {\rm\cite{CL1})} If $(E,+,{}',0,1)$ is an effect algebra satisfying both the {\rm ACC} and the {\rm DCC}, $(T,R)$ a time frame, $x\in(\mathcal P_+E)^T$, $A,B\in\mathcal P_+(A^T)$ with $A\leq B$ and $s\in T$ and $\varphi$ denotes the transformation function then
\begin{enumerate}[{\rm(i)}]
\item
\begin{align*}
P\big(\varphi(x)\big)(s) & =\Min U\big(\bigcup\{x(t)\mid t\mathrel Rs\}\big), \\
F\big(\varphi(x)\big)(s) & =\Min U\big(\bigcup\{x(t)\mid s\mathrel Rt\}\big), \\
H\big(\varphi(x)\big)(s) & =\Max L\big(\bigcup\{x(t)\mid t\mathrel Rs\}\big), \\
G\big(\varphi(x)\big)(s) & =\Max L\big(\bigcup\{x(t)\mid s\mathrel Rt\}\big),
\end{align*}
\item $H(A)=P(A')'$ and $G(A)=F(A')'$,
\item $P(A)\leq_2P(B)$, $F(A)\leq_2F(B)$, $H(A)\leq_1H(B)$ and $G(A)\leq_1G(B)$,
\item $H(A)\leq P(A)$ and $G(A)\leq F(A)$.
\end{enumerate}
\end{proposition}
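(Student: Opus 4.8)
The plan is to treat the four items in the order (i)~$\to$~(ii)~$\to$~(iii)~$\to$~(iv): part~(i) rewrites $P\circ\varphi$, $F\circ\varphi$, $H\circ\varphi$, $G\circ\varphi$ as applications of $\Min U$ or $\Max L$ to a single set of values, (ii) lets one pass between the $P,F$-statements and the $H,G$-statements via the antitone involution, and seriality of $R$ is needed only at the very end, for (iv). Throughout, $s\in T$ is fixed and $U(\cdot)$, $L(\cdot)$, $\Min$, $\Max$ are formed inside $E$.

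For (i): by definition $H\big(\varphi(x)\big)(s)=\Max L(S_s)$ where $S_s:=\{q(t)\mid q\in\varphi(x)\text{ and }t\mathrel Rs\}$, and the analogous formulas hold for $P,F,G$, so everything reduces to the identity $S_s=\bigcup\{x(t)\mid t\mathrel Rs\}$. The inclusion ``$\subseteq$'' is just the defining condition $q(t)\in x(t)$ of membership in $\varphi(x)$. For ``$\supseteq$'', given $a\in x(t_0)$ with $t_0\mathrel Rs$, one produces a $q\in\varphi(x)$ with $q(t_0)=a$ by choosing $q(t)$ to be an arbitrary element of the \emph{nonempty} set $x(t)$ for $t\neq t_0$; this is the only point where the hypothesis $x\in(\mathcal P_+E)^T$ (and, for infinite $T$, the axiom of choice) enters. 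Substituting the identity into the definitions yields the four displayed formulas, the $F,G$-versions being literally the same with $s\mathrel Rt$ instead of $t\mathrel Rs$.

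For (ii): put $S:=\{p(t)\mid p\in A\text{ and }t\mathrel Rs\}$. Since $A'=\{p'\mid p\in A\}$ and $p'(t)=\big(p(t)\big)'$, the set over which $P(A')(s)$ is formed is exactly $S':=\{c'\mid c\in S\}$. Because $'$ is an antitone involution on $E$ (Lemma~\ref{lem3}(i)), one has $x\in U(S')\iff x'\in L(S)$, i.e.\ $U(S')=\big(L(S)\big)'$, and then, $'$ being an order-reversing bijection of $E$, $\Min U(S')=\big(\Max L(S)\big)'$. Hence $P(A')(s)=\big(\Max L(S)\big)'=\big(H(A)(s)\big)'$; applying $'$ once more gives $H(A)(s)=\big(P(A')(s)\big)'$ for every $s$, that is, $H(A)=P(A')'$. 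The identity $G(A)=F(A')'$ is obtained the same way, and the dualities $P(q)=H(q')'$, $F(q)=G(q')'$ recorded before the proposition are the singleton case $A=\{q\}$.

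For (iii) and (iv): by (ii), and since $'$ reverses $\leq$ and swaps $\leq_1$ with $\leq_2$, it suffices for (iii) to prove $P(A)\leq_2P(B)$ and $H(A)\leq_1H(B)$; the $F,G$-cases follow by replacing $t\mathrel Rs$ with $s\mathrel Rt$. Let $S_A,S_B$ be the sets attached to $A,B$ at $s$ as above. From $A\leq B$ (so $p\leq q$ pointwise whenever $p\in A$, $q\in B$) together with $A\neq\emptyset\neq B$ one reads off $U(S_B)\subseteq U(S_A)$ --- if $x\geq q_0(t)$ for a fixed $q_0\in B$ and all $t\mathrel Rs$, then $x\geq q_0(t)\geq p(t)$ for every $p\in A$ --- and, symmetrically, $L(S_A)\subseteq L(S_B)$. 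By the DCC, each $b\in\Min U(S_B)$ lies in $U(S_A)$ and hence above some $a\in\Min U(S_A)$, which is $P(A)(s)\leq_2P(B)(s)$; dually the ACC gives $H(A)(s)\leq_1H(B)(s)$. Finally, for (iv), every $a\in H(A)(s)=\Max L(S_A)\subseteq L(S_A)$ satisfies $a\leq b$ for every $b\in P(A)(s)=\Min U(S_A)\subseteq U(S_A)$ as soon as $S_A\neq\emptyset$ --- pick any $c\in S_A$, then $a\leq c\leq b$ --- and $S_A\neq\emptyset$ because $A$ is nonempty and $R$ is serial, so some $r$ with $r\mathrel Rs$ exists; $G(A)\leq F(A)$ is the ``$s\mathrel Rt$'' analogue. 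I expect no genuine obstacle; the only delicate points are the involution bookkeeping in (ii) (one must land on $\big(\Max L(S)\big)'$, not a transposed expression) and the consistent use of nonemptiness --- of the $x(t)$ in (i), of $A,B$ in (iii), and of the sets $S_A$ (hence seriality of $R$) in (iv).
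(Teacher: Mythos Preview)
The paper does not supply its own proof of this proposition: the statement carries the tag ``cf.~\cite{CL1}'' and is followed immediately by a remark, with no proof environment at all. So there is nothing to compare against directly.

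That said, your argument is correct and self-contained. The identification $S_s=\bigcup\{x(t)\mid t\mathrel Rs\}$ in~(i) is the essential content, and your construction of a witness $q\in\varphi(x)$ by choice from the nonempty fibres $x(t)$ is the right (and only) way to get the inclusion~$\supseteq$. The duality~(ii) via the antitone involution is routine and correctly stated, and in~(iii) your observation that a \emph{single} $q_0\in B$ (respectively $p_0\in A$) suffices to push membership in $U(S_B)$ down to $U(S_A)$ (respectively $L(S_A)$ up to $L(S_B)$) is exactly how the nonemptiness of $A,B$ enters. For~(iv) the seriality of $R$ is indeed needed precisely to ensure $S_A\neq\emptyset$, and your ``sandwich'' $a\leq c\leq b$ is the clean way to conclude. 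One minor remark: in~(iii) you invoke~(ii) to reduce the $F,G$ cases to $P,H$, but in fact the $F,G$ cases are literally the same argument with $s\mathrel Rt$ in place of $t\mathrel Rs$, as you yourself note; the detour through~(ii) is unnecessary (though not wrong).
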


\begin{remark}
In accordance with Theorem~\ref{th3} one can see that in Example~\ref{ex2} we have
\begin{align*}
                  G(p)\otimes G(q) & =G\big(\varphi(p\otimes q)\big), \\
                  G(q)\otimes G(p) & =G\big(\varphi(q\otimes p)\big), \\
                  H(p)\otimes H(q) & =H\big(\varphi(p\otimes q)\big), \\
                  H(q)\otimes H(p) & =H\big(\varphi(q\otimes p)\big), \\
G\big(\varphi(p\Rightarrow q)\big) & \leq G(p)\Rightarrow G(q), \\
G\big(\varphi(q\Rightarrow p)\big) & \leq G(q)\Rightarrow G(p), \\
H\big(\varphi(p\Rightarrow q)\big) & \leq H(p)\Rightarrow H(q), \\
H\big(\varphi(q\Rightarrow p)\big) & \leq H(q)\Rightarrow H(p).
\end{align*}
\end{remark}

The following concept was defined in \cite{CP15a}. Its specification for effect algebras is as follows.

A {\em dynamic effect algebra} is an effect algebra $(E,+,{}',0,1)$ together with two mappings $H$ and $G$ from $E^T$ to $(\mathcal P_+E)^T$ such that for all $p,q\in E^T$ the following holds:
\begin{enumerate}[(T1)]
\item $H(1)=G(1)=1$,
\item if $p\leq q$ then $H(p)\leq_1H(q)$ and $G(p)\leq_1G(q)$,
\item if $p+q$ is defined then so are $H(p)+H(q)$ and $G(p)+G(q)$ and
\begin{align*}
H(p)+H(q) & \leq_1H(p+q), \\
G(p)+G(q) & \leq_1G(p+q),
\end{align*}
\item $p\leq_1(G*P)(p)$ and $p\leq_1(H*F)(p)$ where $P(p)=H(p')'$ and $F(p)=G(p')'$.
\end{enumerate}

\begin{remark}\label{rem1}
If $(A,\leq)$ is a poset satisfying both the {\rm ACC} and the {\rm DCC} and $B,C$ are subsets of $A$ with $C\neq\emptyset$ then $B\subseteq C$ implies both $B\leq_1\Max C$ and $\Min C\leq_2B$.
\end{remark}

In what follows we show that an effect algebra with tense operators as defined above is really a dynamic one.

\begin{theorem}
Let $\mathbf E=(E,+,{}',0,1)$ be an effect algebra satisfying both the {\rm ACC} and the {\rm DCC} and $R$ a serial binary relation on $T$ and let $P$, $F$, $H$ and $G$ denote the tense operators induced by $(T,R)$. Then $\mathbf E$ together with $H$ and $G$ forms a dynamic effect algebra.
\end{theorem}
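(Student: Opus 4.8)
The plan is to verify the four axioms (T1)--(T4) of a dynamic effect algebra for the induced operators $H$ and $G$, exploiting the stated duality $P(q)=H(q')'$, $F(q)=G(q')'$ so that it suffices to check everything for $G$ and transfer to $H$ by duality. Throughout I would work from the explicit description in Proposition~\ref{prop1}(i), namely $G\big(\varphi(x)\big)(s)=\Max L\big(\bigcup\{x(t)\mid s\mathrel Rt\}\big)$, and restrict to singleton-valued $x$ (i.e.\ ordinary $p\in E^T$), for which $G(p)(s)=\Max L\big(\{p(t)\mid s\mathrel Rt\}\big)$; here seriality of $R$ guarantees the indexing set $\{t\mid s\mathrel Rt\}$ is non-empty, and the ACC/DCC guarantee $\Max L(\cdot)$ and $\Min U(\cdot)$ are non-empty, so all the operators land in $(\mathcal P_+E)^T$ as required.

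For (T1): since $L(\{1\})=E$ has top element $1$, we get $G(1)(s)=\Max E=\{1\}$ for every $s$, and dually $H(1)=1$. For (T2): if $p\leq q$ then $\{p(t)\mid s\mathrel Rt\}\subseteq L(\{q(t)\mid s\mathrel Rt\})$ pointwise, so $L\big(\{p(t)\mid s\mathrel Rt\}\big)\subseteq L\big(\{q(t)\mid s\mathrel Rt\}\big)$, and by Remark~\ref{rem1} this gives $\Max L(\{p(t)\})\leq_1\Max L(\{q(t)\})$, i.e.\ $G(p)\leq_1G(q)$; the argument for $H$ is dual via $\Min U$. For (T3): assume $p+q$ is defined, so $p(t)+q(t)$ is defined for each $t$; fix $s$ and write $S:=\{t\mid s\mathrel Rt\}$. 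Any element of $G(p)(s)+G(q)(s)$ has the form $x+y$ with $x\in\Max L\big(\{p(t)\mid t\in S\}\big)$ and $y\in\Max L\big(\{q(t)\mid t\in S\}\big)$; one must first argue this sum is defined. For each $t\in S$ we have $x\leq p(t)$ and $y\leq q(t)$, hence $x\leq q(t)'$ by Lemma~\ref{lem3} (since $p(t)+q(t)$ defined means $p(t)\leq q(t)'$, and $x\leq p(t)\leq q(t)'$), so $x\perp y$ whenever $y\leq q(t)$ — a small check using that $y$ lies below some $q(t)$; once $x+y$ is defined, $x+y\leq p(t)+q(t)$ by Lemma~\ref{lem3}(vii) applied twice, for every $t\in S$, so $x+y\in L\big(\{p(t)+q(t)\mid t\in S\}\big)$, and then Remark~\ref{rem1} yields $G(p)(s)+G(q)(s)\leq_1\Max L\big(\{(p+q)(t)\mid t\in S\}\big)=G(p+q)(s)$.

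The main obstacle is (T4), which asserts $p\leq_1(G*P)(p)$ and $p\leq_1(H*F)(p)$; I would treat $p\leq_1(H*F)(p)$ and get the other by duality. Unwinding, $(H*F)(p)=H\circ\varphi\circ F(p)$, and $F(p)$ is the element $x\in(\mathcal P_+E)^T$ with $x(t)=\Min U\big(\{p(u)\mid t\mathrel Ru\}\big)$; then by Proposition~\ref{prop1}(i), $H\big(\varphi(x)\big)(s)=\Max L\big(\bigcup\{x(t)\mid t\mathrel Rs\}\big)$. The key point is that for a fixed $s$, seriality gives some $t_0$ with $t_0\mathrel Rs$, and then $s$ itself satisfies $t_0\mathrel Rs$, so $p(s)\in\{p(u)\mid t_0\mathrel Ru\}$, whence $p(s)\leq u$ for every $u\in\Min U\big(\{p(u)\mid t_0\mathrel Ru\}\big)=x(t_0)$; thus $p(s)$ is a lower bound of... — rather, we want $p(s)$ below some element of $\Max L\big(\bigcup\{x(t)\mid t\mathrel Rs\}\big)$. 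Since $x(t_0)\subseteq\bigcup\{x(t)\mid t\mathrel Rs\}$ and $p(s)\leq x(t_0)$ (meaning $p(s)\leq v$ for all $v\in x(t_0)$), we need $p(s)\in L\big(\bigcup\{x(t)\mid t\mathrel Rs\}\big)$, which does \emph{not} follow from dominating only $x(t_0)$. The correct reading: we must show $p(s)\leq_1 H\big(\varphi(F(p))\big)(s)$, i.e.\ $p(s)$ lies below \emph{some} maximal lower bound; it suffices to show $p(s)$ is \emph{a} lower bound of $\bigcup\{x(t)\mid t\mathrel Rs\}$, then apply the ACC. So fix any $t$ with $t\mathrel Rs$ and any $v\in x(t)=\Min U\big(\{p(u)\mid t\mathrel Ru\}\big)$; since $t\mathrel Rs$ we have $p(s)\in\{p(u)\mid t\mathrel Ru\}$, so $v\in U\big(\{p(u)\mid t\mathrel Ru\}\big)$ gives $p(s)\leq v$. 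Hence $p(s)\in L\big(\bigcup\{x(t)\mid t\mathrel Rs\}\big)$, and by the ACC there is a maximal element of this lower set above $p(s)$, i.e.\ $p(s)\leq_1 H\big(\varphi(F(p))\big)(s)=(H*F)(p)(s)$. This establishes (T4) for $H*F$; the statement for $G*P$ follows by replacing $R$-successors with $R$-predecessors throughout, or formally by the duality $P(q)=H(q')'$, $G(q)=F(q')'$ together with the fact that $'$ is an antitone involution. Assembling (T1)--(T4) completes the proof that $\mathbf E$ with $H,G$ is a dynamic effect algebra.
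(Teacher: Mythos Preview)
Your proof is correct and follows essentially the same route as the paper's: both verify (T1)--(T4) directly from the $\Max L/\Min U$ formulas, using seriality to ensure the index sets are non-empty, Lemma~\ref{lem3} to handle the sum in (T3), and the observation that $p(s)$ is a lower bound of the relevant union in (T4) together with the ACC (equivalently Remark~\ref{rem1}) to obtain $\leq_1$. The only cosmetic difference is which ``half'' is spelled out --- the paper does $H$ in (T1)--(T3) and $G*P$ in (T4), whereas you do $G$ and $H*F$ --- and your mid-argument self-correction in (T4) could be trimmed, but the final argument is the right one.
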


\begin{proof}
We prove only ``half'' of the statements. The rest follows in an analogous way. Because of duality reasons we have $P(p)=H(p')'$ and $F(p)=G(p')'$. Let $p,q\in E^T$ and $s\in T$.
\begin{enumerate}[(T1)]
\item $H(p)(s)=\Max L(\{1\mid t\mathrel Rs\})=1$.
\item If $p\leq q$ then
\[
H(p)(s)=\Max L\big(\{p(t)\mid t\mathrel Rs\}\big)\subseteq L\big(\{p(t)\mid t\mathrel Rs\}\big)\subseteq L\big(\{q(t)\mid t\mathrel Rs\}\big)
\]
and hence
\[
H(p)(s)\leq_1\Max L\big(\{q(t)\mid t\mathrel Rs\}\big)=H(q)(s)
\]
according to Remark~\ref{rem1}.
\item Assume $p+q$ to be defined. Then $p\leq q'$. Let
\begin{align*}
a & \in H(p)(s)=\Max L\big(\{p(t)\mid t\mathrel Rs\}\big), \\
b & \in H(q)(s)=\Max L\big(\{q(t)\mid t\mathrel Rs\}\big)
\end{align*}
and $u\in T$ with $u\mathrel Rs$. Then $a\leq p(u)$ and $b\leq q(u)$ and hence $a\leq p(u)\leq q'(u)\leq b'$, i.e. $a+b$ is defined. This shows that $H(p)+H(q)$ is defined. Because of Lemma~\ref{lem3} we have
\[
H(p)+H(q)\subseteq L\big(\{p(t)+q(t)\mid t\mathrel Rs\}\big)
\]
which by Remark~\ref{rem1} implies $H(p)+H(q)\leq_1H(p+q)$.
\item Since $P(p)=H(p')'$ we have
\begin{align*}
              P(p)(s) & =\Min U(\{p(t)\mid t\mathrel Rs\}), \\
\varphi\big(P(p)\big) & =\{r\in A^T\mid r(u)\in P(p)(u)\text{ for all }u\in T\}= \\
                      & =\{r\in A^T\mid r(u)\in \Min U(\{p(t)\mid t\mathrel Ru\})\text{ for all }u\in T\}, \\
          (G*P)(p)(s) & =G\Big(\varphi\big(P(p)\big)\Big)(s)=\Max L(\{r(v)\mid r\in\varphi\big(P(p)\big)\text{ and }s\mathrel Rv\})= \\
                      & =\Max L\big(\bigcup\{\Min U(\{p(t)\mid t\mathrel Rv\})\mid s\mathrel Rv\}\big), \\
                 p(s) & \in L\big(\bigcup\{U(\{p(t)\mid t\mathrel Rv\})\mid s\mathrel Rv\}\big)\subseteq \\
								      & \subseteq L\big(\bigcup\{\Min U(\{p(t)\mid t\mathrel Rv\})\mid s\mathrel Rv\}\big)
\end{align*}
and hence $p(s)\leq_1(G*P)(p)(s)$ according to Remark~\ref{rem1}. The last but one line can be seen as follows: If $a\in\bigcup\{U(\{p(t)\mid t\mathrel Rv\})\mid s\mathrel Rv\}$ then there exists some $v\in T$ with $s\mathrel Rv$ and $a\in U(\{p(t)\mid t\mathrel Rv\})$. But then $p(s)\leq a$.
\end{enumerate}
\end{proof}

There is the question whether the tense operators defined above are compatible with the logical connectives defined in the previous section. We are going to show that they are compatible with $\otimes$ if and only if they are compatible with $\Rightarrow$.

\begin{theorem}\label{th3}
Let $(E,+,{}',0,1)$ be an effect algebra satisfying both the {\rm ACC} and the {\rm DCC}, $(T,R)$ a time frame. Then the following holds:
\begin{enumerate}[{\rm(i)}]
\item Let $X,Y\in\{P,F,H,G\}$ and $Z\in\{H,G\}$ and assume
\[
X\big(\varphi(x)\big)\otimes Y(q)\leq_1Z\big(\varphi(x\otimes q)\big)\text{ for all }x\in(\mathcal P_+E)^T\text{ and all }q\in E^T.
\]
Then
\[
X\big(\varphi(p\Rightarrow q)\big)\sqsubseteq Y(p)\Rightarrow Z(q)\text{ for all }p,q\in E^T.
\]
\item Let $X\in\{H,G\}$ and $Y,Z\in\{P,F,H,G\}$ and assume
\[
X\big(\varphi(p\Rightarrow x)\big)\leq_1Y(p)\Rightarrow Z\big(\varphi(x)\big)\text{ for all }p\in E^T\text{ and all }x\in(\mathcal P_+E)^T.
\]
Then
\[
X(p)\otimes Y(q)\sqsubseteq Z\big(\varphi(p\otimes q)\big)\text{ for all }p,q\in E^T.
\]
\end{enumerate}
\end{theorem}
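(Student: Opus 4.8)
The plan is to read both parts as \emph{mate} correspondences for the adjoint pair $(\otimes,\Rightarrow)$: the hypothesis asserts a lax compatibility of the triple of tense operators with $\otimes$ (resp.\ $\Rightarrow$) and we must deduce the lax compatibility with $\Rightarrow$ (resp.\ $\otimes$). The bridge is Lemma~\ref{lem8} (pointwise adjointness of $\otimes$ and $\Rightarrow$) together with the unit/counit inequalities of Lemma~\ref{lem6}, transported through the transformation function by Lemma~\ref{lem7} and the $\leq_1$-monotonicity of $H,G$ from Proposition~\ref{prop1}(iii). In both parts the scheme is: (a) rewrite the desired conclusion by one application of Lemma~\ref{lem8} so that it acquires the same shape as the hypothesis; (b) feed in a well-chosen instance of the hypothesis; (c) close the remaining gap with Lemma~\ref{lem6}. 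Throughout, the relations $\leq$, $\leq_1$, $\sqsubseteq$ on elements of $(\mathcal P_+E)^T$ are read pointwise, and I use that $\leq_1$ is transitive and that a non-empty $A$ with $A\leq_1 B$ satisfies $A\sqsubseteq B$.

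For part~(i), fix $p,q\in E^T$ and $s\in T$. By Lemma~\ref{lem8} applied at $s$ (with the non-empty sets $X(\varphi(p\Rightarrow q))(s)$, $Y(p)(s)$, $Z(q)(s)$; non-emptiness holds because tense operators are $(\mathcal P_+E)^T$-valued and, by the DCC, $\otimes$ of non-empty sets is non-empty), the claim $X(\varphi(p\Rightarrow q))\sqsubseteq Y(p)\Rightarrow Z(q)$ is equivalent to $X(\varphi(p\Rightarrow q))\otimes Y(p)\sqsubseteq Z(q)$. Now instantiate the hypothesis with the subset-valued proposition $p\Rightarrow q$ and with $p$ playing the role of the single-valued variable; this yields $X(\varphi(p\Rightarrow q))\otimes Y(p)\leq_1 Z\big(\varphi((p\Rightarrow q)\otimes p)\big)$. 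By Lemma~\ref{lem6}(ii), $(p\Rightarrow q)\otimes p\leq q$ as an inequality in $(\mathcal P_+E)^T$, so writing $\widetilde q$ for the lift $t\mapsto\{q(t)\}$, Lemma~\ref{lem7}(ii),(iii) gives $\varphi\big((p\Rightarrow q)\otimes p\big)\leq\varphi(\widetilde q)=\{q\}$, and then Proposition~\ref{prop1}(iii) (here $Z\in\{H,G\}$) yields $Z\big(\varphi((p\Rightarrow q)\otimes p)\big)\leq_1 Z(q)$. Chaining the two $\leq_1$-steps gives $X(\varphi(p\Rightarrow q))\otimes Y(p)\leq_1 Z(q)$, hence $\sqsubseteq$, which is what was needed.

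Part~(ii) is the dual argument. Fix $p,q\in E^T$ and $s\in T$. By Lemma~\ref{lem8} at $s$, the claim $X(p)\otimes Y(q)\sqsubseteq Z(\varphi(p\otimes q))$ is equivalent to $X(p)\sqsubseteq Y(q)\Rightarrow Z(\varphi(p\otimes q))$. Instantiate the hypothesis with $q$ in the role of the single-valued variable and with the subset-valued proposition $p\otimes q$: this gives $X\big(\varphi(q\Rightarrow(p\otimes q))\big)\leq_1 Y(q)\Rightarrow Z(\varphi(p\otimes q))$. By Lemma~\ref{lem6}(i), $p\leq q\Rightarrow(p\otimes q)$ in $(\mathcal P_+E)^T$, so by Lemma~\ref{lem7}(ii),(iii), $\{p\}=\varphi(\widetilde p)\leq\varphi\big(q\Rightarrow(p\otimes q)\big)$, and since $X\in\{H,G\}$, Proposition~\ref{prop1}(iii) gives $X(p)=X(\{p\})\leq_1 X\big(\varphi(q\Rightarrow(p\otimes q))\big)$. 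Chaining, $X(p)\leq_1 Y(q)\Rightarrow Z(\varphi(p\otimes q))$, hence $\sqsubseteq$, which is the required statement.

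The computations are short; the part that needs care is the bookkeeping of the overloaded notation and the pointwise extensions — in particular recognising that $Z(q)$ abbreviates $Z(\{q\})$, that $\varphi$ sends the constant-singleton lift $\widetilde p$ of $p\in E^T$ back to $\{p\}$ (Lemma~\ref{lem7}(ii)), and that the inequalities of Lemma~\ref{lem6} must be read in $(\mathcal P_+E)^T$ so that Lemma~\ref{lem7}(iii) applies. A secondary point, and the place where both chain conditions are genuinely used, is verifying non-emptiness of all the sets occurring (so that $\leq_1$ upgrades to $\sqsubseteq$ and Lemma~\ref{lem8} is applicable).
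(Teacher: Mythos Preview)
Your proof is correct and follows essentially the same route as the paper: in each part you instantiate the hypothesis at $x=p\Rightarrow q$ (resp.\ $x=p\otimes q$), use Lemma~\ref{lem6} for the counit/unit inequality, Lemma~\ref{lem7} and Proposition~\ref{prop1}(iii) to push through $\varphi$ and the tense operator, and then Lemma~\ref{lem8} to swap $\otimes$ and $\Rightarrow$. The only cosmetic difference is that the paper invokes divisibility $(p\Rightarrow q)\otimes p=\Max L(p,q)$ explicitly before using $\Max L(p,q)\leq q$, whereas you cite Lemma~\ref{lem6}(ii) directly; your added remarks on non-emptiness and the $\leq_1\Rightarrow\sqsubseteq$ step are helpful clarifications that the paper leaves implicit.
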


\begin{proof}
Let $p,q\in E^T$ and $x\in(\mathcal P_+E)^T$.
\begin{enumerate}[(i)]
\item Because of divisibility we have
\[
X\big(\varphi(p\Rightarrow q)\big)\otimes Y(p)\leq_1Z\Big(\varphi\big((p\Rightarrow q)\otimes p\big)\Big)=Z\Big(\varphi\big(\Max L(p,q)\big)\Big).
\]
Now $\Max L(p,q)\leq q$ implies $\varphi\big(\Max L(p,q)\big)\leq q$ according to Lemma~\ref{lem7} whence
\[
X\big(\varphi(p\Rightarrow q)\big)\otimes Y(p)\leq_1Z\Big(\varphi\big(\Max L(p,q)\big)\Big)\leq_1Z(q)
\]
according to (iii) of Proposition~\ref{prop1}. Hence $X\big(\varphi(p\Rightarrow q)\big)\otimes Y(p)\sqsubseteq Z(q)$. Adjointness and Lemma~\ref{lem8} yields
\[
X\big(\varphi(p\Rightarrow q)\big)\sqsubseteq Y(p)\Rightarrow Z(q).
\]
\item
Because of (i) of Lemma~\ref{lem6} and (iii) of Proposition~\ref{prop1} we obtain
\[
X(p)\leq_1X\Big(\varphi\big(q\Rightarrow(p\otimes q)\big)\Big)\leq_1Y(q)\Rightarrow Z\big(\varphi(p\otimes q)\big)
\]
and therefore
\[
X(p)\sqsubseteq Y(q)\Rightarrow Z\big(\varphi(p\otimes q)\big).
\]
Adjointness and Lemma~\ref{lem8} yields
\[
X(p)\otimes Y(q)\sqsubseteq Z\big(\varphi(p\otimes q)\big).
\]
\end{enumerate}
\end{proof}

\section{Constructions of a suitable preference relation}

As shown in the previous section, if a time frame $(T,R)$ is given then one can define the tense operators $P$, $F$, $H$ and $G$ on the logic derived from an effect algebra such that we obtain a dynamic effect algebra and the logical connectives $\Rightarrow$ arrow and $\otimes$ satisfy the required properties. Hence, considering tense logic based on an effect algebra, tense operators as derived here can serve for its analysis.  However, there is the question whether also conversely, when given a time set $T$ and and tense operators in the logic derived in Section~4, one can find a time preference relation such that the induced tense operators coincide with the given ones. In the following theorem we show that this is really possible since from the given tense operators we can construct a binary relation $R^*$ on $T$ such that the tense operators induced by the time frame $(T,R^*)$ are comparable with the given tense operators with respect to the quasiorder relations $\leq_1$ and $\leq_2$, respectively. Moreover, if $P$, $F$, $H$ and $G$ are induced by some time frame, then the new constructed tense operators are even equivalent to the given ones via the equivalence relations $\approx_1$ and $\approx_2$.

In the following for given effect algebra $\mathbf E=(E,+,{}',0,1)$, given time set $T$ and given tense operators $P$, $F$, $H$ and $G$ on $\mathbf E$ we call the set $R^*$ of all $(s,t)\in T^2$ satisfying
\[
H(p)(t)\leq p(s)\leq P(p)(t)\text{ and }G(p)(s)\leq p(t)\leq F(p)(s)
\]
for all $p\in E^T$ the {\em relation induced by the tense operators $P$, $F$, $H$ and $G$}.

\begin{theorem}\label{th4}
Let $\mathbf E=(E,+,{}',0,1)$ be an effect algebra satisfying both the {\rm ACC} and the {\rm DCC}, $T$ a time set and $P$, $F$, $H$ and $G$ tense operators on $\mathbf E$. Further, let $R^*$ denote the relation induced by $P$, $F$, $H$ and $G$ and $P^*$, $F^*$, $H^*$ and $G^*$ the tense operators on $\mathbf E$ induced by $(T,R^*)$. Then
\[
P^*\leq_2P, F^*\leq_2F, H\leq_1H^*\text{ and }G\leq_1G^*.
\]
\end{theorem}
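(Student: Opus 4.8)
The plan is to unwind the definitions on both sides and show that each of the four inequalities follows by a direct membership/order argument, with the crucial input being that the relation $R^*$ is, by construction, contained in the ``set of time pairs that the given operators already tolerate''. First I would fix $p\in E^T$ and $s\in T$ and write out $H^*(p)(s)=\Max L(\{p(t)\mid t\mathrel{R^*}s\})$. The goal for the inequality $H\leq_1 H^*$ is: for every $a\in H(p)(s)$ there is some $b\in H^*(p)(s)$ with $a\leq b$. Since $\mathbf E$ satisfies the ACC, it suffices to produce any $b\in L(\{p(t)\mid t\mathrel{R^*}s\})$ with $a\leq b$, and then invoke the ACC to push $b$ up into $\Max L(\cdots)$. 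I claim $a$ itself works, i.e.\ $a\leq p(t)$ for every $t$ with $t\mathrel{R^*}s$: indeed, $t\mathrel{R^*}s$ means (among other things) $H(p)(t)\leq p(s)\le P(p)(t)$ and $G(p)(s)\leq p(t)\leq F(p)(s)$ for all propositions, but the clause I actually need is the one reading $H(p)(\,\cdot\,)\leq p(\,\cdot\,)$ evaluated so that $a\in H(p)(s)$ forces $a\leq p(t)$. Concretely, from the defining condition of $R^*$ applied with the roles of $s,t$ as in ``$H(p)(t)\le p(s)$'' — here I must be careful which of $s\mathrel{R^*}t$ versus $t\mathrel{R^*}s$ is in play — one of the four listed inequalities gives exactly $a\le p(t)$. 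So $a\in L(\{p(t)\mid t\mathrel{R^*}s\})$, and the ACC yields $b\in\Max L(\cdots)=H^*(p)(s)$ with $a\le b$, which is $H(p)(s)\le_1 H^*(p)(s)$. Since $s$ and $p$ were arbitrary, $H\le_1 H^*$.

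The inequality $G\le_1 G^*$ is entirely dual, replacing ``$t\mathrel{R^*}s$'' by ``$s\mathrel{R^*}t$'' and using the clause $G(p)(s)\le p(t)$ from the definition of $R^*$; the same ACC argument applies. For $P^*\le_2 P$ and $F^*\le_2 F$ I would dualize once more, now using the DCC instead of the ACC: fixing $p,s$, every $b\in P(p)(s)\subseteq U(\{p(t)\mid t\mathrel{R^*}s\})$ (this containment is exactly what the $p(s)\le P(p)(t)$ clause of $R^*$ delivers, read appropriately), and by the DCC there is a minimal element $a\in\Min U(\cdots)=P^*(p)(s)$ below it, i.e.\ $a\le b$; this is precisely $P^*(p)(s)\le_2 P(p)(s)$. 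The $F$ case is the same with the roles of past and future swapped. Alternatively, once $H\le_1 H^*$ and $G\le_1 G^*$ are established, one can deduce $P^*\le_2 P$ and $F^*\le_2 F$ formally from the duality relations $P=H({}')'$, $F=G({}')'$ (and the analogous identities for the starred operators, which hold because $R^*$ is the same relation in all four definitions), using that $A\le_1 B$ iff $B'\le_2 A'$; I would include whichever of these two routes is cleaner.

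The step I expect to be the main obstacle is bookkeeping the direction of $R^*$: the defining condition of $R^*$ bundles together four inequalities involving both $(s,t)$ and $(t,s)$, and one has to match the operator being estimated ($H^*$ versus $H$, and whether the quantifier in $H^*(p)(s)$ ranges over $\{t : t\mathrel{R^*}s\}$ or $\{t : s\mathrel{R^*}t\}$) with the correct one of the four clauses, so that the membership $a\in H(p)(s)$ really does give $a\le p(t)$ and not some useless comparison. Once that alignment is pinned down, each of the four assertions is a two-line application of Remark~\ref{rem1} (with the ACC for the $\le_1$ statements and the DCC for the $\le_2$ statements). I would present the $H$-case in full and indicate the other three as ``dual''.
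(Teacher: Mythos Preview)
Your approach is correct and essentially identical to the paper's: one shows $H(q)(s)\subseteq L(\{q(t)\mid t\mathrel{R^*}s\})$ (and the three dual containments) straight from the defining clauses of $R^*$, then invokes Remark~\ref{rem1} with the ACC/DCC to pass to $\Max L$ and $\Min U$. The one point you omit, which the paper records as its first line, is that $R^*$ is reflexive (from the standing inequalities $H(p)\leq p\leq P(p)$ and $G(p)\leq p\leq F(p)$) and hence serial, so that the starred operators $P^*,F^*,H^*,G^*$ are defined in the paper's framework to begin with.
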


\begin{proof}
Observe that $R^*$ is reflexive and hence serial. Let $q\in E^T$ and $s\in T$. Then we have 
\begin{align*}
   q(t) & \leq P(q)(s)\text{ for all }t\in T\text{ with }t\mathrel{R^*}s, \\
   q(t) & \leq F(q)(s)\text{ for all }t\in T\text{ with }s\mathrel{R^*}t, \\
H(q)(s) & \leq q(t)\text{ for all }t\in T\text{ with }t\mathrel{R^*}s, \\
G(q)(s) & \leq q(t)\text{ for all }t\in T\text{ with }s\mathrel{R^*}t
\end{align*}
and hence
\begin{align*}
P(q)(s) & \subseteq U\big(\{q(t)\mid t\mathrel{R^*}s\}\big), \\
F(q)(s) & \subseteq U\big(\{q(t)\mid s\mathrel{R^*}t\}\big), \\
H(q)(s) & \subseteq L\big(\{q(t)\mid t\mathrel{R^*}s\}\big), \\
G(q)(s) & \subseteq L\big(\{q(t)\mid s\mathrel{R^*}t\}\big)
\end{align*}
whence
\begin{align*}
P^*(q)(s) & =\Min U\big(\{q(t)\mid t\mathrel{R^*}s\}\big)\leq_2P(q)(s), \\
F^*(q)(s) & =\Min U\big(\{q(t)\mid s\mathrel{R^*}t\}\big)\leq_2F(q)(s), \\
  H(q)(s) & \leq_1\Max L\big(\{q(t)\mid t\mathrel{R^*}s\}\big)=H^*(q)(s), \\
  G(q)(s) & \leq_1\Max L\big(\{q(t)\mid s\mathrel{R^*}t\}\big)=G^*(q)(s)
\end{align*}
according to Remark~\ref{rem1}.
\end{proof}

The reason why we obtain only inequalities between the operators $P$, $F$, $H$, $G$ and $P^*$, $F^*$, $H^*$, $G^*$ is that the given ones may be rather ``exotic'' since it is not said how they are created. Hence one may ask how the mentioned operators will be related in the case that the given operators $P$, $F$, $H$ and $G$ are induced by some time frame $(T,R)$ sharing the same time set $T$. The following result shows that in this case we really obtain equivalences.

\begin{theorem}\label{th5}
Let $\mathbf E=(E,+,{}',0,1)$ be an effect algebra satisfying both the {\rm ACC} and the {\rm DCC}, $(T,R)$ a time frame and $P$, $F$, $H$ and $G$ the tense operators on $\mathbf E$ induced by $(T,R)$. Further, let $R^*$ denote the relation induced by $P$, $F$, $H$ and $G$ and $P^*$, $F^*$, $H^*$ and $G^*$ the tense operators on $\mathbf E$ induced by $(T,R^*)$. Then $R\subseteq R^*$ and
\[
P^*\approx_2P, F^*\approx_2F, H^*\approx_1H\text{ and }G^*\approx_2G.
\]
\end{theorem}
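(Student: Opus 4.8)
The plan is to first establish the inclusion $R\subseteq R^*$, and then to combine this inclusion with Theorem~\ref{th4} to obtain the claimed equivalences. For the inclusion, recall that $R^*$ consists of those $(s,t)\in T^2$ satisfying $H(p)(t)\leq p(s)\leq P(p)(t)$ and $G(p)(s)\leq p(t)\leq F(p)(s)$ for all $p\in E^T$, where now $P$, $F$, $H$, $G$ are the operators induced by $(T,R)$. So let $(s,t)\in R$; I must verify these four inequalities. For instance, $H(p)(t)=\Max L(\{q(u)\mid u\mathrel Rt\})$, and since $s\mathrel Rt$, every element of this set is $\leq p(s)$, hence $H(p)(t)\leq p(s)$; dually $p(s)\leq\Min U(\{q(u)\mid u\mathrel Rt\})=P(p)(t)$. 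Symmetrically, from $s\mathrel Rt$ one gets $G(p)(s)=\Max L(\{q(u)\mid s\mathrel Ru\})\leq p(t)$ and $p(t)\leq\Min U(\{q(u)\mid s\mathrel Ru\})=F(p)(s)$. Thus $(s,t)\in R^*$ and $R\subseteq R^*$.

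Next I would use $R\subseteq R^*$ to get the reverse comparisons to those in Theorem~\ref{th4}. For any $q\in E^T$ and $s\in T$, the inclusion $R\subseteq R^*$ gives $\{q(t)\mid t\mathrel Rs\}\subseteq\{q(t)\mid t\mathrel{R^*}s\}$, hence $L(\{q(t)\mid t\mathrel{R^*}s\})\subseteq L(\{q(t)\mid t\mathrel Rs\})$, and therefore by Remark~\ref{rem1} we obtain $H^*(q)(s)=\Max L(\{q(t)\mid t\mathrel{R^*}s\})\leq_1\Max L(\{q(t)\mid t\mathrel Rs\})=H(q)(s)$. The dual argument with $U$ in place of $L$ gives $P(q)(s)=\Min U(\{q(t)\mid t\mathrel Rs\})\leq_2\Min U(\{q(t)\mid t\mathrel{R^*}s\})=P^*(q)(s)$, and analogously $G^*\leq_1G$ and $F\leq_2F^*$.

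Finally, combining these with Theorem~\ref{th4}, which gives $P^*\leq_2P$, $F^*\leq_2F$, $H\leq_1H^*$ and $G\leq_1G^*$, yields $P^*\approx_2P$, $F^*\approx_2F$, $H^*\approx_1H$ and $G^*\approx_1G$, as claimed. (The statement writes $G^*\approx_2 G$; since the relation $\approx_1$ is the one pairing with $\leq_1$ for $H$ and $G$, I expect this is a typo for $\approx_1$, and the proof delivers $G^*\approx_1G$.) I do not anticipate a serious obstacle here: the only point requiring a little care is keeping straight which of $\leq_1$, $\leq_2$ goes with $\Max L$ versus $\Min U$, and checking that the set-inclusions reverse correctly under $L$ and $U$; the rest is a direct application of Theorem~\ref{th4} and Remark~\ref{rem1}.
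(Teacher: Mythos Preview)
Your proposal is correct and follows essentially the same approach as the paper's own proof: first verify $R\subseteq R^*$ by checking the four defining inequalities directly from the $\Min U$/$\Max L$ definitions, then use this inclusion together with Remark~\ref{rem1} to obtain $P\leq_2P^*$, $F\leq_2F^*$, $H^*\leq_1H$, $G^*\leq_1G$, and finally combine with Theorem~\ref{th4}. You are also right that $G^*\approx_2G$ in the statement is a typo for $G^*\approx_1G$.
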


\begin{proof}
Let $q\in E^T$ and $s\in T$. If $t\in T$ and $s\mathrel Rt$ then
\begin{align*}
P(q)(t) & =\Min U(\{q(u)\mid u\mathrel Rt\})\geq q(s), \\
F(q)(s) & =\Min U(\{q(u)\mid s\mathrel Ru\})\geq q(t), \\
H(q)(t) & =\Max L(\{q(u)\mid u\mathrel Rt\})\leq q(s), \\
G(q)(s) & =\Max L(\{q(u)\mid s\mathrel Ru\})\leq q(t).
\end{align*}
This shows $R\subseteq R^*$. Now
\begin{align*}
P^*(q)(s) & =\Min U(\{q(t)\mid t\mathrel{R^*}s\})\subseteq U(\{q(t)\mid t\mathrel{R^*}s\})\subseteq U(\{q(t)\mid t\mathrel Rs\}), \\
F^*(q)(s) & =\Min U(\{q(t)\mid s\mathrel{R^*}t\})\subseteq U(\{q(t)\mid s\mathrel{R^*}t\})\subseteq U(\{q(t)\mid s\mathrel Rt\}), \\
H^*(q)(s) & =\Max L(\{q(t)\mid t\mathrel{R^*}s\})\subseteq L(\{q(t)\mid t\mathrel{R^*}s\})\subseteq L(\{q(t)\mid t\mathrel Rs\}), \\
G^*(q)(s) & =\Max L(\{q(t)\mid s\mathrel{R^*}t\})\subseteq L(\{q(t)\mid s\mathrel{R^*}t\})\subseteq L(\{q(t)\mid s\mathrel Rt\})
\end{align*}
and hence
\begin{align*}
  P(q)(s) & =\Min U(\{q(t)\mid t\mathrel Rs\})\leq_2P^*(q)(s), \\
  F(q)(s) & =\Min U(\{q(t)\mid s\mathrel Rt\})\leq_2F^*(q)(s), \\
H^*(q)(s) & \leq_1\Max L(\{q(t)\mid t\mathrel Rs\})=H(q)(s), \\
G^*(q)(s) & \leq_1\Max L(\{q(t)\mid s\mathrel Rt\})=G(q)(s)
\end{align*}
according to Remark~\ref{rem1}. This shows
\begin{align*}
  P & \leq_2P^*, \\
  F & \leq_2F^*, \\
H^* & \leq_1H, \\
G^* & \leq_1G.
\end{align*}
The rest follows from Theorem~\ref{th4}.
\end{proof}

We can ask whether there exists another construction of the time preference relation $R$ on the time set $T$ such that we obtain equivalences between the given tense operators and those induced by $(T,R)$ similarly as in Theorem~\ref{th5}. We will show that this is possible provided the time set $T$ is extended in the direction of the past as well as in the direction of the future and provided that the events are extended accordingly. Our construction is as follows.

First we introduce some notation.

Let $\mathbf E=(E,+,{}',0,1)$ be an effect algebra, $T$ a time set and $P$, $F$, $H$ and $G$ tense operators on $\mathbf E$. Put
\[
\bar T:=(T\times\{1\})\cup T\cup(T\times\{2\}),
\]
let $R^*$ denote the relation induced by $P$, $F$, $H$ and $G$
and put
\[
\bar R:=\{\big((t,1),t\big)\mid t\in T\}\cup R^*\cup\{\big(t,(t,2)\big)\mid t\in T\}.
\]
Moreover, let $\bar P$, $\bar F$, $\bar H$ and $\bar G$ denote the tense operators induced by $(\bar T,\bar R)$. Finally, for each $p\in E^T$ let $\bar p,\hat p\in E^{\bar T}$ such that
\begin{align*}
\bar p\big((t,1)\big) & \in P(p)(t), \\
            \bar p(t) & :=p(t), \\
\bar p\big((t,2)\big) & \in F(p)(t),
\end{align*}
\begin{align*}
\hat p\big((t,1)\big) & \in H(p)(t), \\
            \hat p(t) & :=p(t), \\
\hat p\big((t,2)\big) & \in G(p)(t)
\end{align*}
for all $t\in T$.

\begin{theorem}
Let $\mathbf E=(E,+,{}',0,1)$ be an effect algebra, $T$ a time set, $P$, $F$, $H$ and $G$ tense operators and $\bar T$, $R^*$, $\bar R$, $\bar P$, $\bar F$, $\bar H$, $\bar G$, $\bar p$ and $\hat p$ defined as before. Then
\[
\bar P(\bar p)|T\subseteq P(p), \bar F(\bar p)|T\subseteq F(p), \bar H(\hat p)|T\subseteq H(p)\text{ and }\bar G(\hat p)|T\subseteq G(p)
\]
for all $p\in E^T$. If $P(p)(t)$, $F(p)(t)$, $H(p)(t)$ and $G(p)(t)$ are singletons for all $p\in E^T$ and all $t\in T$ then
\[
\bar P(\bar p)|T=P(p), \bar F(\bar p)|T=F(p), \bar H(\hat p)|T=H(p)\text{ and }\bar G(\hat p)|T=G(p)
\]
for all $p\in E^T$.
\end{theorem}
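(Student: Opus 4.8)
The plan is to unwind the definition of the tense operators induced by $(\bar T,\bar R)$ at the points of $T$ and to show that each of $\bar P(\bar p)(s)$, $\bar F(\bar p)(s)$, $\bar H(\hat p)(s)$, $\bar G(\hat p)(s)$ (for $s\in T$) collapses to a one-element set, namely to the value of $\bar p$, respectively $\hat p$, at the adjoined ``past'' point $(s,1)$ or ``future'' point $(s,2)$. Since by construction that value lies in $P(p)(s)$, $F(p)(s)$, $H(p)(s)$, respectively $G(p)(s)$, the four inclusions follow at once, and whenever these latter sets are singletons the inclusions become equalities. Because of the duality $P(q)=H(q')'$, $F(q)=G(q')'$ together with the past/future symmetry built into $\bar R$ and into the pair $\bar p$ versus $\hat p$, I would work out only the case of $\bar G$ in detail and merely indicate the modifications for the remaining three.

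First I would fix $s\in T$ and determine $\{\bar t\in\bar T\mid s\mathrel{\bar R}\bar t\}$ from the three constituents of $\bar R$: the only $\bar R$-successor of $s$ in $T\times\{2\}$ is $(s,2)$, the point $s$ has no $\bar R$-successor in $T\times\{1\}$, and its $\bar R$-successors inside $T$ are exactly its $R^*$-successors; hence this set equals $\{(s,2)\}\cup\{t\in T\mid(s,t)\in R^*\}$. Applying $\hat p$, which restricts to $p$ on $T$, the set of values entering the definition of $\bar G(\hat p)(s)$ is $S:=\{p(t)\mid(s,t)\in R^*\}\cup\{\hat p\big((s,2)\big)\}$. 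Now I would invoke the definition of $R^*$: for every $t$ with $(s,t)\in R^*$ one has $G(p)(s)\leq p(t)$, hence $\hat p\big((s,2)\big)\leq p(t)$ since $\hat p\big((s,2)\big)\in G(p)(s)$. Thus $\hat p\big((s,2)\big)$ is the least element of $S$, so $L(S)$ is just the down-set of $\hat p\big((s,2)\big)$ and $\bar G(\hat p)(s)=\Max L(S)=\{\hat p\big((s,2)\big)\}\subseteq G(p)(s)$. For $\bar H$ one argues symmetrically with $(t,s)\in R^*$, which yields $H(p)(s)\leq p(t)$ and hence $\hat p\big((s,1)\big)\leq p(t)$; for $\bar P$ one uses that $(t,s)\in R^*$ gives $p(t)\leq P(p)(s)$, so $\bar p\big((s,1)\big)$ is the \emph{greatest} element of the relevant set and $\Min U$ returns it; and $\bar F$ is the future analogue of $\bar P$. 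For the last claim, if every $G(p)(s)$ is a singleton then $\hat p\big((s,2)\big)\in G(p)(s)$ already forces $G(p)(s)=\{\hat p\big((s,2)\big)\}=\bar G(\hat p)(s)$, and likewise for the other three operators.

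The step I expect to be the crux is the use of the definition of $R^*$: one has to observe that the defining inequalities of the induced relation make the adjoined value $\hat p\big((s,2)\big)$ (respectively $\hat p\big((s,1)\big)$, $\bar p\big((s,1)\big)$, $\bar p\big((s,2)\big)$) already a bound --- from below for $G$ and $H$, from above for $P$ and $F$ --- for \emph{all} the other values $p(t)$ that occur, so that $\Max L$, respectively $\Min U$, has nothing to do but return that single value. A subsidiary point requiring a little care is the bookkeeping in the first step, namely checking that the two adjoined layers $T\times\{1\}$ and $T\times\{2\}$ of $\bar T$ introduce, at each point of $T$, no $\bar R$-edge beyond the one intended. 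Note finally that no chain condition on $\mathbf E$ is needed in this argument, since the down-sets and up-sets that occur are principal and hence automatically possess a greatest, respectively least, element.
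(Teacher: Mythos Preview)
Your proposal is correct and follows essentially the same route as the paper: both arguments unwind the $\bar R$-neighbours of a point $s\in T$, use the defining inequalities of $R^*$ to see that the adjoined value $\bar p\big((s,i)\big)$ or $\hat p\big((s,i)\big)$ already bounds all the remaining values $p(t)$, and conclude that the relevant $\Min U$ or $\Max L$ collapses to that single element, which by construction lies in the target set. The paper writes out all four computations explicitly whereas you do $\bar G$ in detail and sketch the others, but the content is the same; your remark that no chain condition is needed (since the resulting up-/down-sets are principal) is a nice observation the paper leaves implicit.
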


\begin{proof}
Let $q\in E^T$ and $s\in T$. Then $H(q)(s)\leq q(t)\leq P(q)(s)$ and hence $\hat q\big((s,1)\big)\leq q(t)\leq\bar q\big((s,1)\big)$ for all $t\in T$ with $t\mathrel{R^*}s$ and $G(q)(s)\leq q(t)\leq F(q)(s)$ and hence $\hat q\big((s,2)\big)\leq q(t)\leq\bar q\big((s,2)\big)$ for all $t\in T$ with $s\mathrel{R^*}t$ and therefore
\begin{align*}
\bar P(\bar q)(s) & =\Min U\big(\{\bar q(\bar t)\mid\bar t\mathrel{\bar R}s\}=\Min U\Big(\{\bar q\big((s,1)\big)\}\cup\{\bar q(t)\mid t\mathrel{R^*}s\}\Big)= \\
                  & =\Min U\Big(\{\bar q\big((s,1)\big)\}\cup\{q(t)\mid t\mathrel{R^*}s\}\Big)=\{\bar q\big((s,1)\big)\}\subseteq P(q)(s), \\
\bar F(\bar q)(s) & =\Min U\big(\{\bar q(\bar t)\mid s\mathrel{\bar R}\bar t\}=\Min U\Big(\{\bar q(t)\mid s\mathrel{R^*}t\}\cup\{\bar q\big((s,2)\big)\}\Big)= \\
                  & =\Min U\Big(\{q(t)\mid s\mathrel{R^*}t\}\cup\{\bar q\big((s,2)\big)\}\Big)=\{\bar q\big((s,2)\big)\}\subseteq F(q)(s), \\
\bar H(\hat q)(s) & =\Max L\big(\{\hat q(\bar t)\mid\bar t\mathrel{\bar R}s\}=\Max L\Big(\{\hat q\big((s,1)\big)\}\cup\{\hat q(t)\mid t\mathrel{R^*}s\}\Big)= \\
                  & =\Max L\Big(\{\hat q\big((s,1)\big)\}\cup\{q(t)\mid t\mathrel{R^*}s\}\Big)=\{\hat q\big((s,1)\big)\}\subseteq H(q)(s), \\
\bar G(\hat q)(s) & =\Max L\big(\{\hat q(\bar t)\mid s\mathrel{\bar R}\bar t\}=\Max L\Big(\{\hat q(t)\mid s\mathrel{R^*}t\}\cup\{\hat q\big((s,2)\big)\}\Big)= \\
                  & =\Max L\Big(\{q(t)\mid s\mathrel{R^*}t\}\cup\{\hat q\big((s,2)\big)\}\Big)=\{\hat q\big((s,2)\big)\}\subseteq G(q)(s).
\end{align*}
If $P(q)(s)$ is a singleton then because of $\bar q\big((s,1)\big)\in P(q)(s)$ we have
\[
\bar P(\bar q)(s)=\{\bar q\big((s,1)\big)\}=P(q)(s),
\]
i.e.\ $\bar P(\bar q)|T=P(q)$. Analogous equalities hold for $F$, $H$ and $G$.
\end{proof}

\end{document}